\documentclass[12pt,fleqn]{amsart}

\usepackage[utf8]{inputenc}
\usepackage[T1]{fontenc}

\usepackage{a4wide}
\usepackage{scrextend}

\usepackage{amssymb, amsmath, amsthm, mathtools}
\usepackage{mathrsfs} 
\usepackage[mathscr]{euscript}
\usepackage{dsfont} 
\usepackage{graphicx} 
\usepackage{enumerate} 
\usepackage[shortlabels]{enumitem} 

\usepackage{lmodern}
\setlength{\mathindent}{0pt}
\usepackage{dsfont}
\usepackage{accents}
\usepackage{bbm}

\usepackage[table, svgnames, x11names]{xcolor}
\usepackage[colorinlistoftodos, prependcaption, textsize=tiny, textwidth=3.5cm]{todonotes}
\presetkeys{todonotes}{fancyline}{}


\newcommand{\jarekin}[1]{\todo[inline, linecolor=Green, backgroundcolor=Green!25, bordercolor=Green, caption={Jarek}]{%
    \begin{minipage}{\textwidth-30pt}#1\end{minipage}}}


\newcommand{\IR}{\mathbb{R}}

\newcommand{\IQ}{\mathbb{Q}}
\newcommand{\N}{\mathbb{N}}
\newcommand{\IN}{\mathbb{N}}


\newcommand{\FF}{\mathscr{F}}

\renewcommand{\leq}{\leqslant}
\renewcommand{\geq}{\geqslant}
\newcommand{\mc}{\mathcal}
\newcommand{\on}{\operatorname}

\newcommand{\ve}{\mathbbm}


\newcommand{\lnn}[1]{\left\vert\kern-0.25ex\left\vert\kern-0.25ex\left\vert #1 
    \right\vert\kern-0.25ex\right\vert\kern-0.25ex\right\vert} 

\newcounter{maintheorem}

\newtheorem{mainth}[maintheorem]{Theorem}
\newcounter{maincorollary}

\newtheorem{theorem}{Theorem}[section]
\newtheorem{lemma}[theorem]{Lemma}
\newtheorem{proposition}[theorem]{Proposition}
\newtheorem{corollary}[theorem]{Corollary}
\theoremstyle{definition}
\newtheorem{definition}[theorem]{Definition}
\newtheorem{question}[theorem]{Question}
\theoremstyle{remark}
\newtheorem{remark}{Remark}

\title[Filter $\pi$-bases]{The $\pi$-property of a Banach space along a filter}
\author[T.~Kania]{Tomasz Kania}
\address[T.~Kania]{Mathematical Institute\\Czech Academy of Sciences\\\v Zitn\'a 25 \\115 67 Praha 1\\Czech Republic  and  Institute of Mathematics and Computer Science\\ Jagiellonian University\\ {\L}ojasiewicza 6, 30-348 Krak\'{o}w, Poland
}
\email{kania@math.cas.cz, tomasz.marcin.kania@gmail.com}

\author[J.~Swaczyna]{Jaros{\l}aw Swaczyna}
\address[J.~Swaczyna]{Institute of Mathematics, {\L}\'od\'z University of Technology, Aleje Politechniki 8,  93-590 {\L}\'od\'z, Poland}
\email{jaroslaw.swaczyna@p.lodz.pl}
\thanks{Funding received from NCN project SONATA BIS 13 No. 2023/50/E/ST1/00067 is acknowledged with thanks. \newline Institute of Mathematics, Czech Academy of Sciences; RVO: 67985840.}
\keywords{Filter bases in Banach spaces, Polish spaces of separable Banach spaces}
\date{\today}

\begin{document}
\maketitle
\begin{abstract}
    We examine the analyticity of the class of separable Banach spaces possessing the $\pi$-property, defined in terms of convergence along a filter. Our results establish that this class is $\Sigma^1_3$ whenever the underlying filter is analytic (as a subset of the Cantor set $\Delta$). Furthermore, we demonstrate that if the filter is countably generated, the class of such spaces is $\Sigma^1_2$ with respect to any admissible Polish topology on the family of closed subspaces of $C(\Delta)$.
\end{abstract}

\section{Introduction}
A Banach space is said to have the \(\pi\)-property if there exists a uniformly bounded net of finite-rank projections acting on the space that converges to the identity map in the strong operator topology, \emph{i.e.}, pointwise. For separable Banach spaces, the \(\pi\)-property is strongly connected to the existence of a finite-dimensional decomposition (FDD): the presence of an FDD ensures the \(\pi\)-property, yet whether the converse holds remains an open question (\cite[p.~3]{HajekMedina}). Notably, the stronger metric \(\pi\)-property does imply the existence of an FDD.

To further explore this relationship, we propose extending the framework to encompass a broader class of spaces. Specifically, we replace convergence in the strong operator topology with convergence along a filter, while maintaining the setting of the strong operator topology. This generalisation not only sheds new light on the \(\pi\)-property but also introduces a concept of intrinsic interest. To lay the groundwork for this study, we formally present the central definition underpinning this note that is expressed in terms of filter convergence. (Let \(\mathcal{F}\) be a filter on \(\N\), and let \(X\) be a Banach space. A sequence \((x_n)_{n=1}^\infty\) in \(X\) is said to converge to \(x \in X\) along \(\mathcal{F}\) if 
\[
    \lim_{n, \mathcal{F}} \|x_n - x\| = 0,
\]
meaning that for every \(\varepsilon > 0\), there exists \(A \in \mathcal{F}\) such that \(\|x_n - x\| < \varepsilon\) for all \(n \in A\).)

\begin{definition}[\(\mathcal{F}\)-\(\pi\)-property]\label{maindef}
Let \(\mathcal{F}\) be a filter on \(\mathbb{N}\). A (separable) Banach space \(X\) has the \(\mathcal{F}\)-\(\pi\)-\emph{property} if there exists a sequence of finite-rank projections \((P_n)_{n=1}^\infty\) on \(X\) such that, for every \(x \in X\),
\[
    \lim_{n, \mathcal{F}} \|P_n x - x\| = 0.
\]
In this case, the sequence \((P_n)_{n=1}^\infty\) is called an \(\mathcal{F}\)-\(\pi\)-\emph{basis} of \(X\). An \(\mathcal{F}\)-\(\pi\)-basis with respect to the Fréchet filter is simply referred to as a \(\pi\)-basis.
\end{definition}

For the sake of avoiding pathologies, we have stated the definition for filters on $\mathbb N$ thereby limiting ourselves to separable Banach spaces. Hereinafter, all filters considered contain the Fréchet filter, that is the filter of cofinite subsets of $\mathbb N$. It is readily seen that for the Fréchet filter, Definition~\ref{maindef} renders the familiar $\pi$-property. 

Banach spaces that have filter bases having continuous basis projections constitute paradigmatic examples of spaces with the $\FF$-$\pi$-property. Indeed, suppose that $(e_n)_{n=1}^\infty$ is an $\FF$-basis for a Banach space $X$. This means that for every $x\in X$ there exists a unique sequence of scalars $(e_n^*(x))_{n=1}^\infty$ such that
\begin{itemize}
    \item for every $n\in \mathbb N$, the assignment $y\mapsto e_n^*(y)$ is continuous,
    \item $\lim\limits_{n, \FF}\sum_{j=1}^n e_n^*(x)e_n =x$.
\end{itemize}
The finite-rank operators $(P_n)_{n=1}^\infty$ given by $P_n x = \sum_{j=1}^n e_n^*(x) e_n$ (basis projections) witness the $\FF$-$\pi$-property of $X$. \smallskip

Kochanek \cite{Kochanek} demonstrated that if \(\mathcal{F}\) is countably generated (or more generally, generated by fewer than \(\mathfrak{p}\) sets, where \(\mathfrak{p}\) denotes the pseudo-intersection number), that is, there exists a countable family (or a family of cardinality less than \(\mathfrak{p}\)) \(\mathscr{B} \subset \mathcal{F}\) such that for every \( A \in \mathcal{F} \), there exists \( B \in \mathscr{B} \) with \( B \subseteq A \), then the associated projections \((P_n)_{n=1}^\infty\) are uniformly bounded on some set in the filter, and the continuity assumption in the definition of an \(\mathcal{F}\)-basis becomes redundant. 

Moreover, the present authors have recently shown that the continuity assumption in the definition of an \(\mathcal{F}\)-basis is redundant for projective filters under proper set-theoretic assumptions (\cite{KaniaSwaczyna}) and even for analytic filters in ZFC (jointly with de Rancourt \cite{RKS}). 

The proof techniques from \cite{Kochanek} inspired Avil\'es \emph{et al.}~\cite{avilesetal} to identify the class of filters possibly generated by \(\mathfrak{c}\)-sized sets for which Kochanek's argument remains valid. A filter \(\mathcal{F}\) is termed \emph{Baire} if, for every complete metric space \( X \) and every family \(\{X_A \colon A \in \mathcal{F}\}\) of nowhere-dense subsets of \( X \) satisfying \( X_A \subseteq X_B \) whenever \( A, B \in \mathcal{F} \) and \( A \subseteq B \), it holds that \(\bigcup_{A \in \mathcal{F}} X_A \neq X\). 

Under Martin's Axiom, the classes of filters that are Baire with respect to all complete metric spaces and filters generated by fewer than continuum-many sets coincide. However, there exist models of ZFC that admit filters generated by exactly \(\mathfrak{c}\)-sized sets that are Baire (\cite[Section 5]{avilesetal}). On the other hand, analytic filters that are not countably generated fail to be Baire with respect to some Polish space (\cite[Proposition 4.1]{avilesetal}), which constrains the range of spaces with the \(\mathcal{F}\)-\(\mathcal{\pi}\)-property derived from \(\mathcal{F}\)-bases. \smallskip

An \emph{FDD} (finite-dimensional decomposition) for a Banach space \(X\) is a sequence of finite-dimensional subspaces \((F_n)_{n=1}^\infty\) such that every \(x \in X\) admits a unique representation
\[
    x = \sum_{n=1}^\infty x_n, \quad \text{with} \quad x_n \in F_n \ \text{for all} \ n \in \mathbb{N}.
\]
Let us formulate the following question, which encapsulates our motivation for investigating spaces with the \(\mathcal{F}\)-\(\mathcal{\pi}\)-property.

\begin{question}\label{q:1}
    Suppose that a separable Banach space has the \(\mathcal{F}\)-\(\mathcal{\pi}\)-property with respect to a certain filter \(\FF\). Does \( X \) have a finite-dimensional decomposition (FDD)?
\end{question}

The purpose of this note is to evaluate the complexity of spaces with the \(\mathcal{F}\)-\(\mathcal{\pi}\)-property within the framework of analytic filters. This investigation is inspired by a similar result of Ghawadrah \cite{GhaFund}, which concerns spaces with the usual \(\pi\)-property.

To state our result, we introduce the set \(\mathsf{SB}_{\mathcal{F}\text{-}\pi}\), a subset of \(\mathsf{SB}\) (formally defined in the following section) consisting of Banach spaces that exhibit the \(\mathcal{F}\)-\(\mathcal{\pi}\)-property.

\begin{mainth}\label{Th:A}Let $\FF$ be an analytic (or $\Sigma^1_{s}$) filter on $\mathbb N$. Then:
\begin{itemize}
    \item the set $\mathsf{SB}_{\mathcal{F}\text{-}\pi}$ is $\Sigma_3^1$ ($\Sigma_{s+2}^1$) in $\mathsf{SB}$.
    \item if $\FF$ is countably generated, the set $\mathsf{SB}_{\mathcal{F}\text{-}\pi}$ is $\Sigma^1_2$ in $\mathsf{SB}$.
\end{itemize}
\end{mainth}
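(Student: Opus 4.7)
The plan is to rephrase membership in $\mathsf{SB}_{\FF\text{-}\pi}$ as an explicit projective formula over a Polish parameter space, and then tally the complexity quantifier block by quantifier block.

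\emph{Parametrisation.} Using a Kuratowski--Ryll-Nardzewski selection theorem on $\mathsf{SB}$, I fix a Borel assignment $X \mapsto (d_m(X))_m$ picking a dense sequence in each $X$. A candidate sequence of finite-rank projections $(P_n)$ on $X$ is recorded by its matrix of values on the dense sequence: a double sequence $y_{n,m} := P_n(d_m(X))$ in $C(\Delta)^{\mathbb N \times \mathbb N}$ together with a rational uniform bound $M$. I would verify that the following conditions on $(M,(y_{n,m}))$, parametrised by $X$, are Borel:
\begin{itemize}
    \item $y_{n,m} \in X$ for all $n, m$, by the Effros--Borel structure on $\mathsf{SB}$;
    \item for each $n$, the map $d_m \mapsto y_{n,m}$ extends linearly and with norm $\leq M$ to an operator $P_n$ on $X$ (checked via rational combinations of the $d_m(X)$);
    \item the linear span of $\{y_{n,m} : m \in \mathbb N\}$ is finite-dimensional (countable $\exists r$, $\exists m_1,\ldots,m_r$, countable $\forall m$, solvability of a linear system);
    \item $P_n \circ P_n = P_n$, reducible after continuous extension to a countable Borel test on the dense image.
\end{itemize}
Denote the conjunction by $\Phi(X, M, (y_{n,m}))$, which is Borel.

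\emph{Quantifier count for analytic $\FF$.} We then have
\[
    X \in \mathsf{SB}_{\FF\text{-}\pi} \iff \exists\, (M, (y_{n,m}))\ \Phi \wedge \forall x \in X\ \forall \e \in \Q_{>0}\ \exists A \in \FF\ \forall n \in A\ \|P_n x - x\| < \e,
\]
where $P_n x$ is defined via the continuous extension guaranteed by $\Phi$. Working outward from the Borel innermost predicate: $\forall n \in A$ is a countable Borel test; $\exists A \in \FF$ is an existential quantifier over a $\Sigma^1_s$ subset of the Cantor set and therefore contributes $\Sigma^1_s$; countable $\forall \e$ preserves this class; the Polish quantifier $\forall x$, rewritten as $\forall x \in C(\Delta)\,(x \in X \Rightarrow \cdots)$, lifts $\Sigma^1_s$ to $\Pi^1_{s+1}$; and the final Polish $\exists$ promotes it to $\Sigma^1_{s+2}$. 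For $s = 1$ this gives $\Sigma^1_3$.

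\emph{Countably generated case and main difficulty.} With a countable base $\{B_j : j \in \mathbb N\} \subseteq \FF$ the key simplification
\[
    \exists A \in \FF\ \forall n \in A\ \psi(n) \iff \exists j \in \mathbb N\ \forall n \in B_j\ \psi(n)
\]
replaces the analytic quantifier by a countable one, so the convergence clause becomes Borel. The $\forall x \in X$ then lifts it only to $\Pi^1_1$, and the outer Polish $\exists$ yields $\Sigma^1_2$. The main technical hurdle I anticipate lies in the parametrisation step: cleanly verifying that a parameter $(M, (y_{n,m}))$ genuinely encodes a sequence of bounded finite-rank idempotents defined on all of $X$, and not merely on its dense subset, while keeping each sub-predicate at most Borel; the rest of the argument is a careful but mechanical quantifier count.
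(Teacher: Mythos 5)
Your overall architecture --- Borel selection of a dense sequence, encoding candidate operators by their matrices of values on that sequence, and then counting quantifiers --- is exactly the paper's, and your quantifier arithmetic (analytic $\exists A\in\FF$ contributes $\Sigma^1_s$, the Polish $\forall x\in X$ lifts to $\Pi^1_{s+1}$, the outer Polish $\exists$ gives $\Sigma^1_{s+2}$, and the countable base collapses the filter quantifier to yield $\Sigma^1_2$) matches the paper's computation. However, there is a genuine gap in your parametrisation: you record the witnessing sequence together with \emph{a single rational uniform bound} $M$ with $\|P_n\|\leq M$ for all $n$. The $\FF$-$\pi$-property does not require uniform boundedness of the witnessing projections, and for a general filter no uniform boundedness principle is available. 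Worse, by Proposition 2.3 of the paper, any \emph{uniformly bounded} $\FF$-$\pi$-basis admits a subsequence that is an ordinary $\pi$-basis; so the set your formula defines is just $\mathsf{SB}_{\mathcal{F}_{\rm Fr}\text{-}\pi}$ (the classical, Borel, $\pi$-property class) independently of $\FF$, not $\mathsf{SB}_{\FF\text{-}\pi}$. The whole reason the paper lands above the Borel hierarchy is precisely the possible failure of uniform boundedness --- this is why its characterisation (Lemma 3.1) opens with $\exists(\lambda_n)_{n=1}^\infty$, a \emph{sequence} of norm bounds, one per projection.

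The fix is conceptually small but propagates: replace $M$ by a sequence $(\lambda_n)$ quantified over the Polish space $\mathbb{R}^{\mathbb{N}}$ (this does not change the complexity count), and then note that your convergence clause must be reformulated accordingly. To evaluate $\|P_n x - x\|$ for arbitrary $x\in X$ from the data $(y_{n,m})$ you approximate $x$ by some $d_i(X)$, and the required precision of that approximation must now depend on $n$ through $\lambda_n$ (the paper's condition \eqref{L: baza} demands $\|x-x_i\|\leq \tfrac{1}{12 r \lambda_n}$ \emph{inside} the $\forall n\in A$ block, and Corollary 2.9 shows this suffices). Two further remarks: your insistence on exact idempotency $P_n\circ P_n=P_n$ is checkable but the paper instead verifies only $\|T_n-T_n^2\|\leq\Theta_n(\lambda_n)$ and recovers genuine projections via a Casazza--Kalton-type perturbation (Lemma 2.7), which makes the forward direction of the characterisation easier to arrange after pushing images into finite rational spans (Corollary 2.6); and your finite-rank test is subsumed in the paper by restricting the matrices to the Borel set $S$ of eventually-zero-column arrays.
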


For the Fréchet filter $\mathcal{F}_{{\rm Fr}}$, that is, in the case of the usual \(\pi\)-property, Ghawadrah~\cite{GhaFund} proved that $\mathsf{SB}_{\mathcal{F}_{{\rm Fr}}\text{-}\pi}$ is $\Sigma^0_6$. Even though neither Ghawadrah's identification of the Borel class of $\mathsf{SB}_{\mathcal{F}_{{\rm Fr}}\text{-}\pi}$ nor our identification of $\mathsf{SB}_{\mathcal{F}\text{-}\pi}$ is proved to be optimal, the increase in Borel complexity by one appears to be inevitable. This suggests a negative answer to Question~\ref{q:1}, should the $\pi$-property be equivalent to the existence of an FDD for separable Banach spaces (see~\cite[Theorem 6.4]{Casazza}).

Note also that the family $\mathsf{SB}_{\mathcal{F}\text{-}\pi}$ may depend on the particular choice of the filter $\mathcal{F}$. The following theorem asserts that a certain reduction to an analytic filter is possible.
\begin{mainth}\label{Th:B}
Let $\mathcal{F}$ be a filter on $\mathbb{N}$. Suppose that $X$ is a Banach space with the $\mathcal{F}$-$\pi$-property, and let $(P_n)_{n=1}^\infty$ witness this property. Then there exists an analytic filter $\mathcal{F}' \subset \mathcal{F}$ such that $(P_n)_{n=1}^\infty$ also witnesses the $\mathcal{F}'$-$\pi$-property of $X$.
\end{mainth}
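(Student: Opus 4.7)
The plan is to define $\mathcal{F}'$ as the filter generated by a canonical analytic family of sets already belonging to $\mathcal{F}$. For each $x \in X$ and $\varepsilon > 0$, set
\[
A_{x, \varepsilon} := \{n \in \mathbb{N} : \|P_n x - x\| < \varepsilon\};
\]
by the $\mathcal{F}$-$\pi$-property of $X$, each $A_{x, \varepsilon}$ lies in $\mathcal{F}$. Let $\mathcal{G}$ be the family of all such sets together with the Fréchet filter, and let $\mathcal{F}'$ be the filter on $\mathbb{N}$ generated by $\mathcal{G}$. Since $\mathcal{G} \subseteq \mathcal{F}$ and $\mathcal{F}$ is itself closed under finite intersections and supersets, the containment $\mathcal{F}' \subseteq \mathcal{F}$ is automatic; in particular, $\varnothing \notin \mathcal{F}'$, so $\mathcal{F}'$ is a proper filter, and it contains the Fréchet filter by construction. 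Because each $A_{x, \varepsilon}$ is a generator of $\mathcal{F}'$, the sequence $(P_n)_{n=1}^\infty$ witnesses the $\mathcal{F}'$-$\pi$-property of $X$.

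The only substantive point is thus to verify that $\mathcal{F}'$ is analytic in $2^{\mathbb{N}}$. I would first show that $\mathcal{G}$ itself is analytic. Consider the map $\Phi \colon X \times (0,\infty) \to 2^{\mathbb{N}}$ given by $\Phi(x, \varepsilon) = A_{x, \varepsilon}$. Since $P_n$ is bounded, the preimage under $\Phi$ of each subbasic cylinder $\{C \in 2^{\mathbb{N}} : n \in C\}$ is the open set $\{(x, \varepsilon) : \|P_n x - x\| < \varepsilon\}$; hence $\Phi$ is Borel-measurable. As a Borel image of the Polish space $X \times (0,\infty)$, the family $\{A_{x, \varepsilon} : x \in X, \varepsilon > 0\}$ is analytic, and adjoining the $F_\sigma$ set $\mathcal{F}_{\mathrm{Fr}}$ keeps $\mathcal{G}$ analytic.

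Next, I pass from $\mathcal{G}$ to the filter it generates by the standard prescription: $A \in \mathcal{F}'$ iff there exist $k \geq 0$ and $G_1, \ldots, G_k \in \mathcal{G}$ with $A \supseteq G_1 \cap \cdots \cap G_k$. For each fixed $k$, the relation $A \supseteq G_1 \cap \cdots \cap G_k$ defines a closed subset of $(2^{\mathbb{N}})^{k+1}$; intersecting with $\mathcal{G}^k$ in the last $k$ coordinates gives an analytic set, whose projection onto the first coordinate is again analytic. Taking the countable union over $k$ shows that $\mathcal{F}'$ is analytic, completing the argument.

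I do not anticipate any real obstacle. The filter-theoretic content (properness, containment in $\mathcal{F}$, and inclusion of the Fréchet filter) is automatic from the construction, and the descriptive set-theoretic content reduces to the standard closure of the analytic class under Borel images, finite products, and projections. The only step that rewards some care is the Borelness of $\Phi$, which nonetheless follows at once from the continuity of the maps $x \mapsto \|P_n x - x\|$ for each fixed $n$.
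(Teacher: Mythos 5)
Your proposal is correct and follows essentially the same route as the paper: both take the canonical family of sets $A_{x,\varepsilon}=\{n:\|P_nx-x\|<\varepsilon\}$ (the paper closes it under supersets at the outset, you defer that to the filter-generation step), verify it lies in $\mathcal{F}$ and is analytic, and then observe that generating a filter from an analytic family preserves analyticity via a countable union over the number of generators of projections of analytic sets. The only cosmetic difference is that you obtain analyticity of the generating family as a Borel image of $X\times(0,\infty)$, whereas the paper writes it as a projection of a $G_\delta$ subset of $X\times(0,\infty)\times\{0,1\}^{\mathbb N}$; both are standard and equivalent.
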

It is worth noting that Theorem~\ref{Th:B} does not render the consideration of higher projective (and beyond) complexities superfluous, as we do not assert that the choice of $\mathcal{F}'$ can be made uniformly for all $X \in \mathsf{SB}$.

\begin{mainth}\label{Th:C}
Let $\mathsf{SB}_{\operatorname{Filter-}\pi}$ denote the class of all $X \in \mathsf{SB}$ which possess the $\mathcal{F}$-$\pi$-property for some filter $\mathcal{F}$. Then $\mathsf{SB}_{\operatorname{Filter-}\pi}$ is $\Sigma^1_3$ within $\mathsf{SB}$.
\end{mainth}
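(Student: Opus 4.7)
The plan is to combine Theorem~\ref{Th:B} with a parameterised reading of Theorem~\ref{Th:A}. By Theorem~\ref{Th:B}, a space $X \in \mathsf{SB}$ lies in $\mathsf{SB}_{\operatorname{Filter-}\pi}$ if and only if it has the $\mathcal{F}'$-$\pi$-property for \emph{some} analytic filter $\mathcal{F}' \subseteq P(\mathbb{N})$. Since every analytic subset of $\Delta = 2^{\mathbb{N}}$ arises as a vertical section $U_\alpha = \{S \in \Delta : (\alpha, S) \in U\}$ of a fixed universal $\Sigma^1_1$ set $U \subseteq \omega^\omega \times \Delta$, this yields the equivalence
\[
    X \in \mathsf{SB}_{\operatorname{Filter-}\pi} \iff \exists\, \alpha \in \omega^\omega\ \bigl(U_\alpha \text{ is a filter extending the Fr\'echet filter and } X \in \mathsf{SB}_{U_\alpha\text{-}\pi}\bigr).
\]

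The point is to bound the complexity of each conjunct uniformly in $\alpha$. The predicate ``$U_\alpha$ is a filter extending the Fr\'echet filter'' is $\Pi^1_2$: closure under binary intersections reads as $\forall S, T \in \Delta$ of the implication $(S \in U_\alpha \wedge T \in U_\alpha) \to S \cap T \in U_\alpha$, whose matrix is $\Pi^1_1 \vee \Sigma^1_1$, and the universal quantifier over $\Delta^2$ pushes this to $\Pi^1_2$ by Kuratowski--Tarski; upward closure, properness, and the containment of every cofinite set are handled analogously. For the second conjunct, the argument proving Theorem~\ref{Th:A} for a fixed analytic filter $\mathcal{F}$ can be read off as giving a $\Sigma^1_3$ relation in the parameters $(\alpha, X)$, since the only occurrence of $\mathcal{F}$ in the formula
\[
    \exists (P_n)\ \forall x \in X\ \forall \varepsilon > 0\ \exists A \in U_\alpha\ \forall n \in A\ \|P_n x - x\| < \varepsilon
\]
is the clause $A \in U_\alpha$, which is $\Sigma^1_1$ jointly in $(\alpha, A)$; the remaining quantifier blocks then stack up to $\Sigma^1_3$ in $(\alpha, X)$.

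The conjunction of the two clauses is therefore $\Sigma^1_3$ in $(\alpha, X)$, and prefixing the existential real quantifier $\exists \alpha$ keeps the result in $\Sigma^1_3$ (as $\Sigma^1_3$ is closed under existential quantification over a Polish space). I expect the main obstacle, and the only one that is not mere bookkeeping, to be verifying that the argument for Theorem~\ref{Th:A} is genuinely \emph{uniform} in the analytic code for $\mathcal{F}$, that is, that one may treat $\alpha$ as a parameter throughout and absorb the resulting free variable into the existing quantifier blocks without inflating the complexity class. Once this uniformity is confirmed, the $\Sigma^1_3$ bound for $\mathsf{SB}_{\operatorname{Filter-}\pi}$ follows at once.
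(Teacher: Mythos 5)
Your proposal is correct and follows essentially the same route as the paper's proof: code analytic filters by points of a Polish space, observe that the filter axioms yield a $\Pi^1_2$ predicate on codes, run the Theorem~\ref{Th:A} machinery uniformly in the code (the filter enters only through the $\Sigma^1_1$ clause ``$A$ belongs to the coded set''), and close under the existential real quantifier over codes. The only divergence is the coding device --- the paper represents an analytic filter as $\Pi_{\{0,1\}^{\mathbb{N}}}[D]$ with $D$ ranging over the standard Borel space $F(\{0,1\}^{\mathbb{N}}\times\mathbb{N}^{\mathbb{N}})$ of closed sets, rather than as a section of a universal $\Sigma^1_1$ set, which is immaterial. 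The one point where you must be careful is that the uniformization has to be applied to the characterization of Lemma~\ref{lem:main} (with the $(\lambda_n)_{n=1}^\infty$ and $\sigma$ coding of the candidate projections), not to the naive formula $\exists (P_n)\,\forall x\,\forall\varepsilon\,\exists A\in U_\alpha\cdots$ that you display: as the Remark closing Section~3 explains, there is no suitable Polish space of finite-rank operators over which $\exists(P_n)$ could range, so that formula cannot by itself be assigned a projective class.
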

\section{Preliminaries}
Our notation concerning Banach spaces is standard. We work with Banach spaces over real or complex scalar field. All operators are assumed to be bounded and linear. A~\emph{projection} is an idempotent operator. We denote the identity operator on a normed space $F$ by $I_F$.\smallskip 

By the Banach--Mazur theorem, every separable Banach space embeds isometrically into $C(\Delta)$, the space of continuous functions on the Cantor set. The space $\mathsf{SB}$ of all closed (linear) subspaces of $C(\Delta)$ is a standard Borel space when furnished with the Effros--Borel $\sigma$-algebra that is generated by the sets $E^+(U) = \{X\in \mathsf{SB}\colon X\cap U\neq \varnothing\}$, where $U$ is a~non-empty, open subset of $C(\Delta)$. There are numerous Polish topologies $\mathsf{SB}$ whose Borel $\sigma$-algebra coincides with the Effros--Borel $\sigma$-algebra, albeit no canonical one.\smallskip

Following Godefroy and Saint-Raymond \cite{GoSR}, we say that a Polish topology $\tau$ on $\mathsf{SB}$ is \emph{admissible} if it satisfies the following conditions:
\begin{itemize}
    \item For every open set $U \subseteq C(\Delta)$, the set $E^+(U)$ belongs to $\tau$;
    \item There exists a subbasis $\mathcal{B}$ of $\tau$ such that every $U \in \mathcal{B}$ can be expressed as a countable union of sets of the form $E^+(U) \setminus E^+(V)$, where $U$ and $V$ are open subsets of $C(\Delta)$.
\end{itemize}
A natural example of an admissible topology is the Wijsman topology, where $X_n \to X$ in $\mathsf{SB}$ if and only if $d(x, X_n) \to d(x, X)$ for every $x \in C(\Delta)$. Recall that a subset of a Polish space is called \emph{analytic} if it is the continuous image of a Borel set in a Polish space.

We will also make use of \cite[Theorem 4.1]{GoSR}. For a detailed study of the descriptive set-theoretic complexity of isometry classes, we refer the reader to \cite{Cuth1, Cuth2} that build upon the results from \cite{GoSR}. Note that since all codings used in those papers agree on projective classes, our results remains true in the context of all those papers.

\begin{theorem}\label{thm:continuous_selections}
There exists a sequence of continuous maps \((g_i)_{i=1}^\infty\), where \(g_i \colon \mathsf{SB} \to C(\Delta)\), such that for every \(X \in \mathsf{SB}\),
\[
    X = \overline{\{g_i(X)\colon i \in \mathbb{N}\}}.
\]
\end{theorem}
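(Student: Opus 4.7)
The plan is to combine Michael's continuous selection theorem with a partition-of-unity gluing argument that exploits the admissibility of the topology together with the linearity of closed subspaces.

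First, I would fix a countable basis $(V_k)_{k=1}^\infty$ of $C(\Delta)$ consisting of open balls with rational radii centred in a countable dense subset. By the first clause of admissibility, each $\mathsf{SB}_k := E^+(V_k)$ is open in $\mathsf{SB}$, hence itself Polish. I would then consider the multimap $\Phi_k \colon \mathsf{SB}_k \to 2^{C(\Delta)}$ given by $\Phi_k(X) = \overline{X \cap V_k}$; this has non-empty closed convex values (convexity of $X$ and $V_k$ is preserved by intersection and closure) and is lower hemicontinuous, since for open $W \subseteq C(\Delta)$,
\[
    \{X \in \mathsf{SB}_k : \Phi_k(X) \cap W \neq \varnothing\} = E^+(V_k \cap W) \cap \mathsf{SB}_k,
\]
which is open. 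Michael's selection theorem then produces a continuous $\sigma_k \colon \mathsf{SB}_k \to C(\Delta)$ with $\sigma_k(X) \in X \cap \overline{V_k}$. Applying Michael once more to the trivial multimap $X \mapsto X$, I obtain a global continuous base selection $\sigma_0 \colon \mathsf{SB} \to C(\Delta)$ with $\sigma_0(X) \in X$.

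Next, I would extend each $\sigma_k$ to a continuous $\mathsf{SB}$-valued map that still lands in $X$. Being metrizable, $\mathsf{SB}$ carries a compatible metric $d$, and $\mathsf{SB}_k$ is an $F_\sigma$: write $\mathsf{SB}_k = \bigcup_{m=1}^\infty F_{k,m}$ with $F_{k,m} := \{X : d(X, \mathsf{SB} \setminus \mathsf{SB}_k) \geq 1/m\}$. Urysohn's lemma furnishes continuous $\lambda_{k,m} \colon \mathsf{SB} \to [0,1]$ with $\lambda_{k,m} \equiv 1$ on $F_{k,m}$ and $\lambda_{k,m} \equiv 0$ off $\mathsf{SB}_k$. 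I would then define
\[
    g_{k,m}(X) := \lambda_{k,m}(X) \sigma_k(X) + (1-\lambda_{k,m}(X)) \sigma_0(X) \quad \text{for } X \in \mathsf{SB}_k,
\]
and $g_{k,m}(X) := \sigma_0(X)$ for $X \notin \mathsf{SB}_k$. Continuity at $\partial \mathsf{SB}_k$ is automatic because $\sigma_k$ maps into the bounded set $\overline{V_k}$ while $\lambda_{k,m}(X) \to 0$ on approach to the boundary. The crucial observation is that $g_{k,m}(X) \in X$ throughout, since $X$ is a linear subspace and hence closed under convex combinations of its elements.

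For density, given $X \in \mathsf{SB}$, $x \in X$, and $\varepsilon > 0$, I choose $V_k$ with $x \in V_k$ and $\operatorname{diam}(V_k) < \varepsilon$; then $X \in \mathsf{SB}_k$, and for sufficiently large $m$ one has $X \in F_{k,m}$, whence $g_{k,m}(X) = \sigma_k(X) \in \overline{V_k}$ and $\|g_{k,m}(X) - x\| < \varepsilon$. Reindexing the doubly-indexed family $(g_{k,m})$ as $(g_i)_{i=1}^\infty$ produces the required sequence. The hard part will be the global extension of the local Michael selections $\sigma_k$: a naive extension by a fixed value fails to be continuous at $\partial \mathsf{SB}_k$, and this is what forces the convex-combination interpolation with $\sigma_0$. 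The trick works precisely because convex combinations of elements of $X$ stay in $X$, a feature unavailable for general closed-valued multimaps; admissibility enters in making each $\mathsf{SB}_k$ open, so the $F_\sigma$-exhaustion and Urysohn construction are licit.
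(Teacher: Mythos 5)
Your proof is correct, but note that the paper does not actually prove this statement: Theorem~\ref{thm:continuous_selections} is imported verbatim from Godefroy and Saint-Raymond (\cite[Theorem 4.1]{GoSR}), so there is no in-paper argument to compare against. Your construction --- applying Michael's selection theorem on each open set $E^+(V_k)$ to the lower hemicontinuous map $X \mapsto \overline{X \cap V_k}$ (whose values are non-empty, closed and convex precisely because the members of $\mathsf{SB}$ are linear subspaces), and then extending each local selection continuously across $\partial E^+(V_k)$ by convex interpolation against a global selection --- is essentially the standard proof of that cited result, and all the steps check out: the identity $\{X : \overline{X\cap V_k}\cap W \neq \varnothing\} = E^+(V_k\cap W)$ does hold for open $W$, and the first clause of admissibility is exactly what makes these sets open. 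Two small simplifications: you may take $\sigma_0 \equiv 0$, since $0 \in X$ for every $X \in \mathsf{SB}$, which removes one invocation of Michael's theorem and makes the boundedness argument for continuity of $g_{k,m}$ at $\partial E^+(V_k)$ immediate; and the second clause of admissibility is not needed here at all (it only matters for upper complexity estimates elsewhere in the paper).
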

As a consequence of the above Theorem we get the following.
\begin{corollary}\label{cor:borel_selections}
    There exists a sequence of Borel maps \((f_i)_{i=1}^\infty\), where \(f_i \colon \mathsf{SB} \to C(\Delta)\), such that for every \(X \in \mathsf{SB}\),
\[
    X = \overline{\{f_i(X)\colon i \in \mathbb{N}\}}
\] and for every infinite-dimensional $X$ vectors $f_i(X)$ are linearly independent.
\end{corollary}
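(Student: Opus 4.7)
The plan is to start from the continuous sequence $(g_i)_{i=1}^\infty$ of Theorem~\ref{thm:continuous_selections} and to build $(f_i)_{i=1}^\infty$ by a controlled Borel perturbation: at each stage $i$ one either keeps $g_i(X)$ (when it already lies outside $\operatorname{span}(f_1(X),\ldots,f_{i-1}(X))$) or replaces it by $g_i(X)+\e_i(X)\,g_{n_i(X)}(X)$, where $n_i(X)$ is the first index whose $g$-vector is independent of the previously chosen data and $\e_i(X)$ is a small positive scalar. Provided one arranges $\|f_i(X)-g_i(X)\|\leq 1/i$ for every $X\in\mathsf{SB}$, density of $\{f_i(X)\colon i\in\mathbb N\}$ in $X$ follows automatically, since replacing each term of a dense sequence by a vector within $1/i$ of it still produces a dense sequence.

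The crucial preliminary observation is that whenever $h,h_1,\ldots,h_k\colon \mathsf{SB}\to C(\Delta)$ are Borel, the set
\[
\bigl\{X\in\mathsf{SB}\colon h(X)\in\operatorname{span}(h_1(X),\ldots,h_k(X))\bigr\}
\]
is Borel. Indeed, since a finite-dimensional subspace of a Banach space is norm-closed and $\mathbb Q$ is dense in $\mathbb R$, membership in the span is equivalent to the $\Pi^0_2$ condition that for every rational $\e>0$ there exist rationals $\alpha_1,\ldots,\alpha_k$ with $\|h(X)-\sum_i\alpha_i h_i(X)\|<\e$, and this condition is manifestly Borel in $X$.

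With this lemma in hand, define $f_i$ by recursion on $i$. Partition $\mathsf{SB}$ Borel-measurably according to whether $g_i(X)\in\operatorname{span}(f_1(X),\ldots,f_{i-1}(X))$; on the former piece set $f_i(X)=g_i(X)$. On the latter, let $n_i(X)$ be the least integer with $g_{n_i(X)}(X)\notin\operatorname{span}(f_1(X),\ldots,f_{i-1}(X),g_i(X))$ and put
\[
f_i(X)=g_i(X)+\frac{1}{i\bigl(1+\|g_{n_i(X)}(X)\|\bigr)}\,g_{n_i(X)}(X).
\]
A one-line check rules out $f_i(X)\in\operatorname{span}(f_1(X),\ldots,f_{i-1}(X))$: if this were the case, dividing by the small nonzero coefficient would force $g_{n_i(X)}(X)$ into $\operatorname{span}(f_1(X),\ldots,f_{i-1}(X),g_i(X))$, contradicting the choice of $n_i(X)$. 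When $X$ is infinite-dimensional the witness $n_i(X)$ always exists, because $\operatorname{span}(f_1(X),\ldots,f_{i-1}(X),g_i(X))$ is then a proper closed subspace of $X$ and thus cannot absorb the whole linearly dense sequence $(g_j(X))_j$; when $X$ is finite-dimensional and no witness exists, one simply puts $f_i(X)=g_i(X)$, forfeiting linear independence (not required in that case) but preserving density.

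The main obstacle is maintaining Borel measurability through the recursion: the span-membership condition at stage $i$ involves the previously built Borel maps $f_1,\ldots,f_{i-1}$, so the preliminary lemma has to be reapplied inductively with these fresh inputs, and one needs to check that the ``least~$n$'' operator and the reciprocal scaling by $1+\|g_{n_i(X)}(X)\|$ preserve Borel measurability. Once these formalities are dealt with, linear independence reduces to the short calculation above and density follows from the uniform estimate $\|f_i(X)-g_i(X)\|\leq 1/i$.
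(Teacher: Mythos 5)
Your proposal is correct in substance but follows a genuinely different route from the paper. The paper fixes a countable base $(U_n)_{n=1}^\infty$ of $C(\Delta)$ and defines $f_n(X)$ as a \emph{selection} $g_{m_n}(X)$ from the original sequence: $m_n$ is the least index $j$ with $g_j(X)\in U_n$ (when $U_n\cap X\neq\varnothing$) and $g_j(X)$ independent of the previously chosen vectors. Density then comes from hitting every basic open set meeting $X$, and Borel measurability is immediate because each $f_n$ agrees with some $g_j$ on each piece of a countable Borel partition of $\mathsf{SB}$. You instead keep the original indexing and perturb $g_i(X)$ by a vector of norm at most $1/i$ pointing in an independent direction; density follows from the vanishing perturbation and independence from the choice of direction. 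Both arguments hinge on the same key fact, which you isolate explicitly and correctly: span-membership $h(X)\in\operatorname{span}(h_1(X),\dots,h_k(X))$ is a Borel condition, since finite-dimensional subspaces are closed and rational combinations are dense in them. Your approach buys a cleaner density argument (no countable base, no case split on $U_n\cap X=\varnothing$), at the price of $f_i(X)$ no longer taking values in the countable set $\{g_j(X)\colon j\in\mathbb N\}$, so the Borel verification has to go through the arithmetic of the ``least $n$'' operator and the scaling, as you acknowledge. Two small points to repair in the write-up: (i) in the detailed recursion you say ``on the former piece set $f_i(X)=g_i(X)$'' where ``former'' refers to $g_i(X)\in\operatorname{span}(f_1(X),\dots,f_{i-1}(X))$ --- this is the opposite of your (correct) stated plan and would destroy independence; the two cases must be swapped. (ii) The assertion that replacing each term of a dense sequence by a vector within $1/i$ of it preserves density uses that a nontrivial Banach space has no isolated points, so that every ball around a given point contains $y_i$ for infinitely many indices $i$; this deserves a sentence.
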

\begin{proof}
    Consider the functions $g_i$ provided by Theorem~\ref{thm:continuous_selections}, and let $(U_n)_{n=1}^\infty$ be a countable base for $C(\Delta)$. By \cite[Corollary 4.2]{GoSR}, the set of finite-dimensional spaces is Borel in $\mathsf{SB}$. Consequently, we may define the functions $f_i$ separately for finite- and infinite-dimensional spaces without violating Borel measurability.

    For finite-dimensional spaces, we set $f_i(X) = g_i(X)$ whenever $\dim X$ is finite. For infinite-dimensional spaces $X \in \mathsf{SB}$, we define the values $f_i(X)$ inductively. Fix $n \in \mathbb{N}$ and assume that $f_i(X)$ has already been defined for all $i < n$. We then define 
    \[
        m_n = \min \left\{ j \in \mathbb{N} \colon g_j(X) \in U_n \wedge g_j(X) \text{ is independent of } \{ f_i(X) \colon i < n \} \right\},
    \]
    whenever $U_n \cap X \neq \emptyset$, and
    \[
        m_n = \min \left\{ j \in \mathbb{N} \colon g_j(X) \text{ is independent of } \{ f_i(X) \colon i < n \} \right\},
    \]
    if $U_n \cap X = \emptyset$. The assumptions on the dimension of $X$ and the density of the set $\{ g_i(X) \colon i \in \mathbb{N} \}$ in $X$ ensure that this construction is well-defined.

    It is clear that the vectors $f_n(X)$ remain linearly independent whenever $\dim X = \mathfrak{c}$. The remaining claims follow straightforwardly. In particular, for every $n \in \mathbb{N}$, there exists a~partition of $\mathsf{SB}$ into Borel sets $(B_i)_{i=1}^\infty$ such that for each $i \in \mathbb{N}$, there exists some $j \in \mathbb{N}$ satisfying $f_n |_{B_i} = g_j |_{B_i}$.
\end{proof}
 
\subsection{Finite-dimensional structures}
Let us begin with the observation that uniformly bounded \(\mathcal{F}\)-\(\pi\)-bases closely resemble classical \(\pi\)-bases.

\begin{proposition}
Let \(X\) be a separable Banach space, and let \(\mathcal{F}\) be a filter on \(\mathbb{N}\) containing the Fréchet filter. Suppose that \((P_n)_{n=1}^\infty\) is an \(\mathcal{F}\)-\(\pi\)-basis of \(X\) which is uniformly bounded; that is, there exists \(M > 0\) such that \(\|P_n\| \leqslant M\) for all \(n \in \mathbb{N}\). Then \((P_n)_{n=1}^\infty\) admits a~subsequence which is a \(\pi\)-basis of \(X\).
\end{proposition}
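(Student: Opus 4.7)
My plan is a standard density-plus-diagonalisation argument, exploiting three facts: the filter $\FF$ contains the Fréchet filter (so every set in $\FF$ is infinite), $X$ is separable, and the projections are uniformly bounded.

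First I would fix a countable dense sequence $(x_j)_{j=1}^\infty$ in $X$. For each pair $(j,m) \in \N \times \N$ the $\FF$-$\pi$-property yields some $A_{j,m} \in \FF$ such that $\|P_n x_j - x_j\| < 1/m$ whenever $n \in A_{j,m}$. Next I would diagonalise: setting $B_k = \bigcap_{j \leq k} A_{j,k}$, we have $B_k \in \FF$ as a finite intersection, and since any set in a filter extending the Fréchet filter must be infinite (its complement is cofinite, hence in $\FF$, which would otherwise force $\varnothing \in \FF$), I can select $n_k \in B_k$ with $n_k > n_{k-1}$. By construction, for every fixed $j$ and every $k \geq j$ we have $\|P_{n_k} x_j - x_j\| < 1/k$, so $P_{n_k} x_j \to x_j$ as $k \to \infty$ in the ordinary (Fréchet) sense.

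To upgrade convergence from the dense set to all of $X$, I would use the uniform bound $\|P_n\| \leq M$. Given $x \in X$ and $\e > 0$, choose $x_j$ with $\|x - x_j\| < \e / (2(M+1))$; then
\[
    \|P_{n_k} x - x\| \leq \|P_{n_k}\|\,\|x - x_j\| + \|P_{n_k} x_j - x_j\| + \|x_j - x\|,
\]
and the first and third terms sum to less than $\e/2$, while the middle term is below $\e/2$ for $k$ large. Hence $P_{n_k} x \to x$ for every $x \in X$, so $(P_{n_k})_{k=1}^\infty$ is a $\pi$-basis (the operators are already finite-rank projections by hypothesis).

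I do not foresee a serious obstacle; the only delicate point is the diagonal selection, and the key conceptual observation powering it is that proper filters extending the Fréchet filter consist exclusively of infinite sets, so the inductive choice of an increasing sequence $n_k \in B_k$ never gets stuck. The uniform boundedness assumption is what allows the density step at the end, and it is precisely this assumption which is automatic in several regimes discussed earlier in the introduction (e.g.\ when $\FF$ is countably generated, by Kochanek's result).
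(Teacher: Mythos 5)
Your argument is correct and follows essentially the same route as the paper's own proof: diagonalising over a countable dense set using finite intersections of filter sets (which are infinite because $\mathcal{F}$ contains the Fréchet filter), then upgrading from the dense set to all of $X$ via the uniform bound $M$ and a three-term triangle-inequality estimate. No gaps.
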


\begin{proof}
Since \(X\) is separable, there exists a countable dense subset \(D = \{x_1, x_2, \dots\} \subseteq X\). For each \(k \in \mathbb{N}\) and \(\varepsilon > 0\), the filter convergence \(\lim_{n, \mathcal{F}} P_n x_k = x_k\) implies that the set
\[
A_{k, \varepsilon} = \{ n \in \mathbb{N} : \|P_n x_k - x_k\| < \varepsilon \}
\]
belongs to \(\mathcal{F}\).  Since \(\mathcal{F}\) is a filter, it is closed under finite intersections.  Thus, for each \(m \in \mathbb{N}\), the set
\[
B_m = \bigcap_{k=1}^m A_{k, \varepsilon/m}
\]
also lies in \(\mathcal{F}\).  Since every filter contains the Fréchet filter, each \(B_m\) is infinite.

We construct a subsequence \((n_m)_{m=1}^\infty\) inductively as follows:
\begin{itemize}
    \item Choose \(n_1 \in B_1\).
    \item For \(m \geq 2\), choose \(n_m \in B_m\) such that \(n_m > n_{m-1}\).
\end{itemize}
This construction is possible because each \(B_m\) is infinite.  By construction, for each \(m \in \mathbb{N}\) and for all \(k = 1, 2, \dots, m\), we have
\begin{equation}\label{eq:Pnm}
    \|P_{n_m} x_k - x_k\| < \frac{\varepsilon}{m}.
\end{equation}
Now, fix \(k \in \mathbb{N}\).  For all \(m \geq k\), equation \eqref{eq:Pnm} holds true. Since \(\varepsilon > 0\) was arbitrary, taking the limit as \(m \to \infty\) gives
\[
\lim_{m \to \infty} P_{n_m} x_k = x_k.
\]
This holds for all \(k \in \mathbb{N}\), so the subsequence \((P_{n_m})\) converges pointwise to the identity operator on the dense set \(D\). Now, let \(x \in X\) and \(\varepsilon > 0\) be given.  Since \(D\) is dense in \(X\), there is \(x_k \in D\) such that
\[
\|x - x_k\| < \frac{\varepsilon}{3(M + 1)}.
\]
Since \(P_{n_m} x_k \to x_k\), there exists \(M_k \in \mathbb{N}\) such that for all \(m \geq M_k\),
\[
\|P_{n_m} x_k - x_k\| < \frac{\varepsilon}{3}.
\]
Therefore, for \(m \geq M_k\), we have
\begin{align*}
\|P_{n_m} x - x\| &\leq \|P_{n_m} x - P_{n_m} x_k\| + \|P_{n_m} x_k - x_k\| + \|x_k - x\| \\
&\leq \|P_{n_m}\| \cdot \|x - x_k\| + \|P_{n_m} x_k - x_k\| + \|x_k - x\| \\
&\leq M \cdot \frac{\varepsilon}{3(M + 1)} + \frac{\varepsilon}{3} + \frac{\varepsilon}{3(M + 1)} \\
&< \frac{\varepsilon}{3} + \frac{\varepsilon}{3} + \frac{\varepsilon}{3} = \varepsilon.
\end{align*}
Thus, for each \(x \in X\), we have shown that  \(\lim_{m \to \infty} P_{n_m} x = x\).  Therefore, the subsequence \((P_{n_m})\) is a \(\pi\)-basis for \(X\).
\end{proof}

We shall require the following perturbation lemma (\cite[Lemma 2.4]{JRZ}).
\begin{lemma}\label{L:pert}
    Let $X$ be a Banach space and let $E\subseteq X$ be a finite-dimensional subspace. Let $T\colon X\to E$ be a surjective operator. If $F\subseteq X$ is a subspace with $k = \dim F \leqslant \dim E = n$ such that $\|T|_F - I_F\| < \varepsilon < 1$, where $\varepsilon\cdot k / (1-\varepsilon) < 1$, then
    \begin{enumerate}[i)]
        \item there is an operator $S$ from $X$ onto an $n$-dimensional subspace of $X$ such that $S|_F = I_F$, $\|S-T\| < \varepsilon\cdot k\cdot \|T\| / (1-\varepsilon) $, and $S^*[X^*] = T^*[X^*]$,
        \item if $T$ is a projection, $S$ may also be arranged to be such, in which case we have $\|S|_{T[X]} - I_{T[X]}\| < \varepsilon\cdot k / (1-\varepsilon)$.
    \end{enumerate}
\end{lemma}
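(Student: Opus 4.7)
The plan is to construct $S$ in the form $S = \Psi \circ T$, where $\Psi \colon E \to X$ is a near-identity modification of the inclusion $\iota_E \colon E \hookrightarrow X$ designed so that $\Psi$ inverts $T|_F$ on the image $G := T[F]$. With such a $\Psi$ at hand, the range of $S$ is $\Psi[E]$, so $S$ surjects onto an $n$-dimensional subspace as soon as $\Psi$ is injective; the identity $S|_F = (\Psi \circ T)|_F = I_F$ reduces to $\Psi|_G = (T|_F)^{-1}$; and $\|S - T\| = \|(\Psi - \iota_E)\circ T\| \leq \|\Psi - \iota_E\|\cdot \|T\|$, which matches the bound in (i) once one secures $\|\Psi - \iota_E\| \leq \e k/(1-\e)$.

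Carrying the construction out, first note that the hypothesis $\|T|_F - I_F\| < \e < 1$ forces $T|_F$ to be bounded below by $1-\e$ on $F$, so $\dim G = k$ and the inverse $A := (T|_F)^{-1} \colon G \to F$ exists with $\|A\|\leq 1/(1-\e)$; the one-line computation $A(g) - g = (I_F - T|_F)(A(g))$ then yields $\|A - \iota_G\|_{G\to X} \leq \e/(1-\e)$. Second, invoke Auerbach's lemma to pick a projection $\pi \colon E \to G$ with $\|\pi\| \leq k$ and set
\[
\Psi := \iota_E + (A - \iota_G) \circ \pi \colon E \to X.
\]
Then $\Psi|_G = A$ and $\|\Psi - \iota_E\| \leq \|A - \iota_G\|\cdot\|\pi\| \leq \e k/(1-\e)$; by hypothesis this is strictly less than $1$, so $\Psi$ is injective on the finite-dimensional $E$. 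The adjoint identity $S^*[X^*] = T^*[X^*]$ then follows from the factorisation $S^* = T^* \circ \Psi^*$ combined with the surjectivity of $\Psi^* \colon X^* \to E^*$, which is a Hahn--Banach extension argument applied to the finite-dimensional injection $\Psi$.

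For (ii), when $T$ is a projection one has $T|_E = I_E$; expanding $T\Psi = T\iota_E + T(A - \iota_G)\circ \pi = I_E + (TA - T\iota_G)\circ\pi$ and observing that both $TA$ and $T\iota_G$ equal $I_G$ on $G$ gives $T\Psi = I_E$, so $S^2 = \Psi(T\Psi)T = \Psi T = S$ and $\|S|_{T[X]} - I_{T[X]}\| = \|\Psi - \iota_E\|$ is already controlled. I expect the main technical point, rather than an obstacle, to be the sharp norm bound $\|\pi\|\leq k$ provided by Auerbach's lemma: the factor of $k$ in the conclusion traces back to this choice, and the strict inequality $\|\Psi - \iota_E\| < 1$ required for the injectivity of $\Psi$ is precisely the quantitative form of the hypothesis $\e k/(1-\e) < 1$.
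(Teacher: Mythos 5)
Your argument is correct: the paper does not prove this lemma but cites it as \cite[Lemma 2.4]{JRZ}, and your construction $S=\Psi\circ T$ with $\Psi=\iota_E+((T|_F)^{-1}-\iota_{T[F]})\circ\pi$, where $\pi$ is the norm-$\leq k$ Auerbach projection of $E$ onto $T[F]$, is essentially the standard Johnson--Rosenthal--Zippin perturbation argument, with all the claimed estimates and the idempotence in part ii) checking out. The only cosmetic point is that the strict inequalities in the conclusion follow because $\|T|_F-I_F\|<\varepsilon$ is strict and $t\mapsto t/(1-t)$ is increasing, which you use implicitly.
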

\begin{corollary}\label{C:O poprawie operatorow}
Let \( X \) be a Banach space, and let \( P \colon X \to X \) be a finite-rank projection. Suppose \( D = \{x_i \colon i \in \mathbb{N}\} \) is a dense subset of \( X \). Define \( X_k \) as the linear span of \( \{x_1, \ldots, x_k\} \) for \( k \in \mathbb{N} \). Then, for every \( \varepsilon \in (0, 1) \), there exist a projection \( P^\prime \) and \( m \in \mathbb{N} \) such that:
\begin{itemize}
    \item \( \|P - P^\prime\| \leq \varepsilon \cdot k \cdot \|P\| \cdot (1 - \varepsilon)^{-1} \), where \( k = \dim \operatorname{im} P \),
    \item \( \operatorname{im} P^\prime \subseteq X_m \),
    \item \( \|P^\prime|_{\operatorname{im} P} - I_{\operatorname{im} P}\| < \varepsilon \cdot \dim \operatorname{im} P / (1 - \varepsilon) \).
\end{itemize}
\end{corollary}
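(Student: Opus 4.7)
The plan is to apply Lemma~\ref{L:pert}(ii) with $T := P$, $E := \operatorname{im} P$ (so the lemma's $n$ equals our $k$), and with the role of the lemma's $F$ played by a carefully chosen $k$-dimensional subspace of some $X_m$ lying close to $\operatorname{im} P$; the resulting projection $S$ will serve as $P'$. The key observation is that part~(i) of the lemma will force $S|_F = I_F$ while $\dim \operatorname{im} S = n = k = \dim F$, which automatically yields $\operatorname{im} S = F \subseteq X_m$ and thus the second bullet.

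To construct $F$, I would fix an Auerbach basis $e_1, \ldots, e_k$ of $\operatorname{im} P$ with norm-one biorthogonal functionals. Since $D$ is dense in $X$, for any $\delta > 0$ one can select indices $j_1, \ldots, j_k$ with $\|x_{j_i} - e_i\| < \delta$; taking $f_i := x_{j_i}$ and $m := \max_i j_i$ places $f_1, \ldots, f_k$ in $X_m$. For $\delta$ small the $f_i$'s are linearly independent, so $F := \operatorname{span}\{f_1, \ldots, f_k\}$ is a $k$-dimensional subspace of $X_m$. For $x = \sum a_i f_i \in F$ and $\tilde x := \sum a_i e_i \in \operatorname{im} P$, the identity $P\tilde x = \tilde x$ gives $\|Px - x\| \leq (\|P\|+1)\|\tilde x - x\|$; the Auerbach property yields $|a_i| \leq \|\tilde x\|$, and combining with $\|\tilde x - x\| \leq \delta \sum_i |a_i|$ produces $\|\tilde x\| \leq \|x\|/(1 - k\delta)$. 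Hence $\|P|_F - I_F\| \leq (\|P\|+1)k\delta/(1-k\delta)$, which can be made less than $\varepsilon$ by shrinking $\delta$ (and, if necessary, enlarging $m$).

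Applying Lemma~\ref{L:pert}(ii) then produces a projection $S$ on $X$ with $\|S - P\| < \varepsilon k \|P\|/(1-\varepsilon)$, $\|S|_{\operatorname{im} P} - I_{\operatorname{im} P}\| < \varepsilon k/(1-\varepsilon)$, and $S|_F = I_F$. The last identity, together with the dimension count, forces $\operatorname{im} S = F \subseteq X_m$, so $P' := S$ meets all three required bullets. If $\varepsilon$ is so large that $\varepsilon k/(1-\varepsilon) \geq 1$ then the lemma is not directly applicable, but the bounds are then trivial and one may simply take $P' := 0$ with any $m$. The only technical point is the estimate on $\|P|_F - I_F\|$; this is the routine but bookkeeping-heavy step, made transparent by the use of an Auerbach basis.
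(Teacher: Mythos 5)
Your proposal is correct and follows essentially the same route as the paper: apply Lemma~\ref{L:pert} with $T=P$, $E=\operatorname{im}P$, and $F$ a $k$-dimensional subspace of some $X_m$ obtained by approximating a (normalized/Auerbach) basis of $\operatorname{im}P$ by elements of $D$. Your version is in fact more detailed than the paper's sketch — in particular, the explicit Auerbach-basis estimate for $\|P|_F-I_F\|$ and the dimension-count argument showing $\operatorname{im}S=F\subseteq X_m$ are exactly the points the paper leaves implicit.
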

For the sake of completeness, we sketch the proof of Corollary~\ref{C:O poprawie operatorow}.
\begin{proof}
     For each \( n \in \mathbb{N} \), the density of \( D \) ensures that there exists \( m \) sufficiently large to find a subspace \( F \subset X_m \) such that \( \|P_n|_F - I_F\| \) is sufficiently small and \( \dim F = \dim \operatorname{im} P_n \). 
    
    To construct \( F \), fix a normed basis \( \{e_1, \ldots, e_n\} \) of \( \operatorname{im} P \). Then, for each \( j \in \{1, \ldots, n\} \), choose \( x_{i_j} \) sufficiently close to \( e_j \) and set \( F = \operatorname{span}\{x_{i_j} : j \in \{1, \ldots, n\}\} \). 
    
    Next, apply Lemma \ref{L:pert} with \( E = \operatorname{im} P \), \( T = P \), and the subspace \( F \) as constructed above. The resulting operator \( S \) from the lemma is the desired \( P^\prime \).
\end{proof}

\begin{corollary}\label{C:corrbase}
    Let \( X \) be a Banach space with the \( \mathcal{F} \)-\( \pi \)-property. Suppose \( D = \{x_i \colon i \in \mathbb{N}\} \) is a dense subset of \( X \). Define \( X_k \) as the linear span of \( \{x_1, \ldots, x_k\} \) for \( k \in \mathbb{N} \). Then, there exists a sequence of projections \( (P_n^\prime)_{n=1}^\infty \) that witnesses the \( \mathcal{F} \)-\( \pi \)-property of \( X \) such that for every \( n \in \mathbb{N} \), there exists \( m \in \mathbb{N} \) with \( \operatorname{im} P_n^\prime \subseteq X_m \).
\end{corollary}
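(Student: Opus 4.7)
The plan is to apply Corollary~\ref{C:O poprawie operatorow} to each individual projection in a witnessing sequence, with a rapidly decaying error parameter, and then check that the resulting sequence still converges along $\mathcal{F}$.

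Fix a sequence $(P_n)_{n=1}^\infty$ witnessing the $\mathcal{F}$-$\pi$-property of $X$. For each $n \in \mathbb{N}$, set $k_n = \dim \operatorname{im} P_n$ and choose $\varepsilon_n \in (0,1)$ small enough that
\[
\frac{\varepsilon_n \cdot k_n \cdot (\|P_n\|+1)}{1-\varepsilon_n} \leq \frac{1}{n};
\]
this is clearly possible because $k_n$ and $\|P_n\|$ are finite. Apply Corollary~\ref{C:O poprawie operatorow} to $P_n$ with this value of $\varepsilon_n$ and the given dense set $D$ to obtain a finite-rank projection $P_n^\prime$ and an index $m_n$ such that $\operatorname{im} P_n^\prime \subseteq X_{m_n}$ and $\|P_n - P_n^\prime\| \leq 1/n$.

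It remains to verify that $(P_n^\prime)_{n=1}^\infty$ witnesses the $\mathcal{F}$-$\pi$-property of $X$. Fix $x \in X$ and $\delta > 0$. By the triangle inequality,
\[
\|P_n^\prime x - x\| \leq \|P_n^\prime - P_n\| \cdot \|x\| + \|P_n x - x\| \leq \frac{\|x\|}{n} + \|P_n x - x\|.
\]
Since $\mathcal{F}$ contains the Fréchet filter, the set $A_1 = \{n \in \mathbb{N}: \|x\|/n < \delta/2\}$ is cofinite and hence belongs to $\mathcal{F}$. Since $(P_n)$ witnesses the $\mathcal{F}$-$\pi$-property, the set $A_2 = \{n \in \mathbb{N}: \|P_n x - x\| < \delta/2\}$ also belongs to $\mathcal{F}$. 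Therefore $A_1 \cap A_2 \in \mathcal{F}$, and for every $n \in A_1 \cap A_2$ we have $\|P_n^\prime x - x\| < \delta$. As $\delta > 0$ was arbitrary, $\lim_{n, \mathcal{F}} \|P_n^\prime x - x\| = 0$, as required.

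No serious obstacle is present: the argument reduces to a standard two-term estimate, and the only subtlety is ensuring the correction error is summable (or at least vanishes) along the Fréchet filter, which is achieved by the $1/n$ bound. Note that we do not need uniform boundedness of $(P_n)$, as the control on $\|P_n^\prime - P_n\|$ is imposed separately for each $n$ through the choice of $\varepsilon_n$.
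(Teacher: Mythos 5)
Your proposal is correct and follows essentially the same route as the paper: fix a witnessing sequence, apply Corollary~\ref{C:O poprawie operatorow} to each $P_n$ with an $\varepsilon_n$ chosen so that $\|P_n-P_n'\|\leq 1/n$, and conclude via the two-term triangle-inequality estimate. Your explicit use of the Fréchet filter to absorb the $\|x\|/n$ term is a welcome (if minor) clarification of a step the paper leaves implicit.
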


\begin{proof}
    Fix any sequence of projections \( (P_n)_{n=1}^\infty \) witnessing the \( \mathcal{F} \)-\( \pi \)-property of \( X \). Let \( (\varepsilon_n)_{n=1}^\infty \) be a sequence such that 
    \[
    \frac{\varepsilon_n \cdot \dim \operatorname{im} P_n \cdot \|P_n\|}{1 - \varepsilon_n} < \frac{1}{n}, \quad n \in \mathbb{N}.
    \]
    For \( n \in \mathbb{N} \), apply Corollary \ref{C:O poprawie operatorow} to \( P_n \) and \( \varepsilon_n \) to obtain \( P_n^\prime \). Note that for \( x \in X \),
    \[
    \|P_n^\prime x - x\| \leq \|P_n^\prime x - P_n x\| + \|P_n x - x\|,
    \]
    and
    \[
    \|P_n^\prime x - P_n x\| \leq \frac{\varepsilon_n \cdot \dim \operatorname{im} P_n \cdot \|P_n\|}{1 - \varepsilon_n} \|x\|,
    \]
    so
    \[
    \|P_n^\prime x - x\| \leq \frac{\varepsilon_n \cdot \dim \operatorname{im} P_n \cdot \|P_n\|}{1 - \varepsilon_n} \|x\| + \|P_n x - x\|.
    \]
    Substituting the condition on \( \varepsilon_n \), we have
    \[
    \|P_n^\prime x - x\| \leq \frac{1}{n} \|x\| + \|P_n x - x\|.
    \]
    Hence, if \( P_n(x) \underset{n, \mathcal{F}}{\longrightarrow} x \), it follows that \( P_n^\prime x \underset{n, \mathcal{F}}{\longrightarrow} x \).
\end{proof}
Moreover, we require a refinement of \cite[Proposition 3.6]{CasazzaKalton}.

\begin{lemma}\label{lem:cas}
    Let \( X \) be a Banach space, $\varepsilon>0$, and let \( T \colon X \to X \) be a linear operator. There exists \(0<\theta < \frac{1}{4}\), such that if $ \|T - T^2\|< \theta$, then there exists a projection \( P \colon X \to X \) such that 
    \begin{enumerate}[label=\roman*)]
    \item \(\{x \in X \colon Tx = x\} \subseteq P[X] \subseteq T[X]
    \), 
    \item \(\|Px-x\|<\varepsilon\) whenever \(\|Tx-x\|<\frac{\varepsilon}{2}\) and \(x\in B_X\)
    \item {}
    \[
        \|P\| \leqslant \frac{1}{2} \left( 1 + \frac{1 + 2\|T\|}{\sqrt{1 - 4\theta}} \right).
    \]
    \item\label{eq:PTestimate} 
    \[
        \|P-T\|\leq \frac{1}{2}(2\|T\|+1)\cdot \left(\frac{1}{\sqrt{1-4\theta}} - 1 \right).
    \]
    \end{enumerate}
    Moreover, $\theta$ may depend only on $\varepsilon$ and upper bound $\lambda \geq\|T\|$, regardless of $T$, and such a~dependence may be arranged to be continuous.
\end{lemma}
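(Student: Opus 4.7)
The plan is to construct $P$ directly by a holomorphic functional calculus for $T$, exploiting the fact that the polynomial $z(1-z)$ vanishes precisely at the two idempotent values $z\in\{0,1\}$. Setting $W := 4(T - T^2)$, the hypothesis $\|T - T^2\| < \theta < 1/4$ gives $\|W\| < 4\theta < 1$, so the binomial series
\[
V := (I - W)^{-1/2} = \sum_{k=0}^\infty \binom{2k}{k} \frac{W^k}{4^k}
\]
converges absolutely in operator norm, commutes with $T$, and obeys $\|V\| \leq (1-4\theta)^{-1/2}$ and $\|V - I\| \leq (1-4\theta)^{-1/2} - 1$. I then define
\[
P := \tfrac{1}{2}\bigl(I - (I - 2T)V\bigr).
\]

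The first task is to verify $P^2 = P$, which reduces to the identity $(2P - I)^2 = I$; this follows from $(2T - I)^2 = I - W$ and $V^2 = (I - W)^{-1}$ via $(2P - I)^2 = (2T - I)^2 V^2 = I$. For (iii) and (iv) I would expand
\[
2P = I - (I - 2T)V, \qquad P - T = \tfrac{1}{2}(I - 2T)(I - V),
\]
and apply the triangle inequality with the estimates on $V$ above and $\|I - 2T\| \leq 2\|T\| + 1$.

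For (i), the inclusion $\{x : Tx = x\} \subseteq P[X]$ is immediate because $Tx = x$ forces $Wx = 0$, hence $Vx = x$ and $Px = \tfrac{1}{2}(x - (I - 2T)x) = x$. For the inclusion $P[X] \subseteq T[X]$, every term in the expansion of $I - V$ has a factor of $W = 4T(I-T)$, so $I - V = T R$ for an operator $R$ that is a power series in $T$; consequently
\[
P = TV + \tfrac{1}{2}(I - V) = T\bigl(V + \tfrac{1}{2}R\bigr) \in T[X].
\]
For (ii), combining (iv) with $\|Px - x\| \leq \|P - T\|\,\|x\| + \|Tx - x\|$ shows that when $x \in B_X$ and $\|Tx - x\| < \varepsilon/2$, it suffices to enforce $\|P - T\| < \varepsilon/2$, which by (iv) and $\|T\| \leq \lambda$ holds as soon as
\[
\theta < \tfrac{1}{4}\left(1 - \Bigl(1 + \tfrac{\varepsilon}{2\lambda + 1}\Bigr)^{-2}\right).
\]
The right-hand side is a strictly positive continuous function of $(\varepsilon,\lambda) \in (0,\infty) \times [0,\infty)$, bounded above by $1/4$, so choosing $\theta$ as any continuous selection strictly below this bound (e.g.\ one half of it) delivers the desired joint continuous dependence.

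The main technical point is largely bookkeeping: justifying the functional-calculus identity $V^2 = (I-W)^{-1}$ by absolute convergence of the series and the commutativity of polynomials in $T$. The only novelty relative to the corresponding step in \cite{CasazzaKalton} lies in tracking explicit constants in (iii) and (iv) and in arranging the dependence of $\theta$ on $(\varepsilon, \lambda)$ to be continuous rather than merely existential.
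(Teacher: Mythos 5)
Your proposal is correct and follows essentially the same route as the paper: the same Casazza--Kalton construction $V=S=\sum_{m\geq 0}\binom{2m}{m}(T-T^2)^m=(I-4(T-T^2))^{-1/2}$, the same formula $P=\tfrac12\bigl(I-(I-2T)V\bigr)$, idempotency via $(I-2T)^2V^2=I$, and the norm estimates from $\|I-2T\|\leq 1+2\|T\|$ and $\|V\|\leq(1-4\theta)^{-1/2}$. If anything, you supply more detail than the paper on part (i) (writing $I-V=TR$ to get $P[X]\subseteq T[X]$) and give an explicit continuous formula for $\theta(\varepsilon,\lambda)$, both of which the paper leaves as ``derived straightforwardly.''
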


For our purposes, we require a slightly stronger statement, which can be derived using the same proof. Thus, we provide the full reasoning here for the sake of completeness. It seems to us that this stronger statement was already used in \cite[Theorem 3.7]{CasazzaKalton} and \cite{GhaFund}, since in either case it is not mentioned why corrected operators have non-trivial fixed points.
\begin{proof}
Define
\[
    S = \sum_{m=0}^\infty \binom{2m}{m} (T - T^2)^m\quad\text{ and }\quad P = \frac{1}{2}\big(I - (I - 2T)S\big).
\]
The function \((1 - 4z)^{-\frac{1}{2}}\) admits a power series expansion given by
\begin{equation}\label{eq:Snorm}
    \sum_{m=0}^\infty \binom{2m}{m} z^m \quad(|z| < \frac{1}{4}).
\end{equation}
From this, it follows that $\|S\| \leq (1 - 4\theta)^{-\frac{1}{2}}$. Moreover, observe that
\[
   (I - 2T)^2
   \;=\;
   I - 4\,T + 4\,T^2 
   \;=\;
   I \;-\; 4\,\bigl(T - T^2\bigr).
\]
On the other hand, by definition of $S$ we have, in the holomorphic functional-calculus sense,
\(
   S 
   \;=\; \bigl(1 - 4\,(T - T^2)\bigr)^{-\tfrac12}.
\)
Hence
\(
   S^2 
   \;=\; \bigl(1 - 4\,(T - T^2)\bigr)^{-1}.
\) 
It follows directly that
\[
   (I - 2T)^2 \, S^2 
   \;=\;
   \bigl[I - 4\,(T - T^2)\bigr] 
   \;\cdot\; \bigl[1 - 4\,(T - T^2)\bigr]^{-1}
   \;=\;
   I.
\]
Consequently, \(P\) is a projection on \(X\), and the stated estimate for \(\|P\|\) follows. Indeed,
\[
   P^2 
   \;=\; \Bigl(\tfrac12\bigl[I - (I - 2T)\,S\bigr]\Bigr)^2 
   \;=\; \tfrac14\,\Bigl[I - (I - 2T)S \Bigr]^2. 
\]
As
\[
   \bigl[I - (I - 2T)\,S\bigr]^2 
   \;=\; I \;-\; 2\,(I - 2T)\,S \;+\; (I - 2T)^2\,S^2,
\]
we have
\[
   P^2 
   \;=\; \tfrac14\,\bigl[\,
         I \;-\; 2\,(I - 2T)\,S \;+\; (I - 2T)^2\,S^2
      \bigr].
\]
so
\[
   P^2 
   \;=\; \tfrac14\,\bigl[I + I - 2(I - 2T)\,S\bigr] 
   \;=\; \tfrac12 \bigl[I - (I - 2T)\,S\bigr] 
   \;=\; P.
\]
Additionally, note that \(P\) can be expressed as:
\[
    P = 3T^2 - 2T^3 + \frac{1}{2}(I - 2T) \sum_{m=2}^\infty \binom{2m}{m} (T - T^2)^m = T + \frac{1}{2}(2T-I)\sum_{m=1}^\infty \binom{2m}{m} (T - T^2)^m.
\]
The remaining properties of \(P\) can be derived straightforwardly; in particular, $\theta$ may be chosen by the last equality. Indeed, the condition 
\(
    \|Px-x\|\leq \varepsilon
\)
follows from 
\[
    \frac{1}{2}(1+\|T\|)\sum_{m=1}^\infty \binom{2m}{m}\theta^m<\frac{\varepsilon}{2},\quad \|x\|\leq 1,\quad \|Tx - x\|<\frac\varepsilon2.
\]
Finally, as \(P-T=\frac{1}{2}(2T-I)\sum_{m=1}^\infty \binom{2m}{m} (T - T^2)^m\) by \eqref{eq:Snorm}, estimate \eqref{eq:PTestimate} follows.
\end{proof}
Let us denote by $\Theta\colon(\lambda,\varepsilon)\mapsto \theta$ the continuous map provided by the above Lemma. 
\begin{corollary}\label{cor:Fpi-from-Tn}
Let $X$ be a Banach space, $\FF$ be a filter on $\mathbb{N}$, and let $\Theta_n \colon \lambda \mapsto \Theta(\lambda, \frac1n)$. Suppose $(\lambda_n)_{n=1}^\infty$ is a~sequence of real numbers all greater than $1$. 

Assume:
\begin{enumerate}
    \item $D := \{x_i : i \in \mathbb{N}\}$ is a dense subset of $X$.
    \item $(T_n)_{n=1}^\infty$ is a sequence of finite-rank operators on $X$ such that
    \[
    \|T_n\|\;\leqslant\;\lambda_n 
    \quad\text{and}\quad
    \|T_n - T_n^2\|\;\leqslant\;\Theta_n(\lambda_n) \quad\text{for all }n.
    \]
    \item For every $x \in B_X$ and every $\varepsilon>0$, there is a set $A \in \FF$ such that for each $n\in A$ one can find $i\in \mathbb{N}$ with
    \begin{equation}\label{eq:approx-dense}
        \|x - x_i\|\;\leqslant\; \frac{\varepsilon}{12\,\lambda_n}
        \quad\text{and}\quad
        \|x_i - T_n x_i\|\;\leqslant\;\frac\varepsilon4.
    \end{equation}    
\end{enumerate}
Then $X$ has the $\FF$-$\pi$ property.
\end{corollary}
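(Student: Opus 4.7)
The plan is to manufacture the sought $\FF$-$\pi$-basis from the operators $T_n$ by means of Lemma~\ref{lem:cas}. For each $n$, apply the lemma to $T_n$ with $\lambda = \lambda_n$ and $\varepsilon = 1/n$: the hypothesis $\|T_n - T_n^2\| \leq \Theta_n(\lambda_n) = \Theta(\lambda_n, 1/n)$ is exactly the threshold condition the lemma requires. This produces a projection $P_n$ satisfying $P_n[X] \subseteq T_n[X]$ (so $P_n$ is of finite rank) together with the norm estimates of items (iii) and (iv).

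The main technical step is to extract the quantitative bound $\|P_n - T_n\| < 1/n$. Inspecting the proof of Lemma~\ref{lem:cas}, the map $\Theta$ is calibrated precisely so that $\tfrac{1}{2}(1+\lambda)\bigl((1 - 4\Theta(\lambda,\varepsilon))^{-1/2} - 1\bigr) < \varepsilon/2$; substituting this into item (iv) collapses to the clean estimate $\|P_n - T_n\| < 1/n$, and consequently $\|P_n\| \leq \lambda_n + 1/n \leq 2\lambda_n$, where we used $\lambda_n > 1$. These two numerical facts do the heavy lifting.

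Now fix $x \in B_X$ and $\varepsilon \in (0,1)$. Hypothesis~(3) supplies $A_1 \in \FF$ together with approximants $x_{i_n}$ satisfying \eqref{eq:approx-dense} for every $n \in A_1$. Intersecting $A_1$ with the cofinite set $\{n : 1/n < \varepsilon/16\}$, which belongs to $\FF$ since $\FF$ extends the Fr\'echet filter, the triangle-inequality splitting
\[
\|P_n x - x\| \leq \|P_n\|\,\|x - x_{i_n}\| + \|P_n - T_n\|\,\|x_{i_n}\| + \|T_n x_{i_n} - x_{i_n}\| + \|x_{i_n} - x\|,
\]
combined with $\|x_{i_n}\| \leq 1 + \varepsilon/(12\lambda_n) \leq 2$ and the four numerical bounds already gathered, estimates the four summands by $\varepsilon/6$, $\varepsilon/8$, $\varepsilon/4$ and $\varepsilon/12$ respectively, which total $5\varepsilon/8 < \varepsilon$. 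Thus $\lim_{n,\FF}\|P_n x - x\| = 0$, and by homogeneity this extends from $B_X$ to all of $X$, so the sequence $(P_n)_{n=1}^\infty$ witnesses the $\FF$-$\pi$-property.

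The chief obstacle is the second step: the bound $\|P_n - T_n\| < 1/n$ is not stated explicitly in Lemma~\ref{lem:cas} and must be recovered from the implicit definition of $\Theta$ inside its proof. Once this quantitative link is pinned down, the constants in hypothesis~(3)---most notably the factor $12$ in the first inequality of \eqref{eq:approx-dense}---are tuned precisely to make the four error terms add up strictly inside~$\varepsilon$, and the remainder is routine bookkeeping.
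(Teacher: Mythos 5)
Your proposal is correct and follows essentially the same route as the paper: apply Lemma~\ref{lem:cas} to each $T_n$ with parameter $\tfrac1n$, then combine the resulting quantitative closeness of $P_n$ to $T_n$ and the bound on $\|P_n\|$ with hypothesis~(3) and a cofinite shrinking of $A$ in a triangle-inequality estimate. The only cosmetic difference is that you bound $\|P_n\|\leq \lambda_n+\tfrac1n\leq 2\lambda_n$ via the extracted estimate $\|P_n-T_n\|<\tfrac1n$, whereas the paper invokes item~(iii) of the lemma directly; both rest on the same calibration of $\Theta$ inside the lemma's proof.
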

\begin{proof}
    Let $(P_n)_{n=1}^\infty$ be the sequence of improved projections associated with $(T_n)_{n=1}^\infty$, provided by Lemma~\ref{lem:cas}. We shall prove that $(P_n)_{n=1}^\infty$ witnesses the $\mathcal{F}$-$\pi$-property of $X$. Let us denote $
    \theta_n=\Theta_n(\lambda_n)$. By construction, each $P_n$ is a finite-rank projection in $X$. It remains to verify that for every $x \in X$, the sequence $(P_n x)$ is $\mathcal{F}$-convergent to $x$. Since $\mathcal{F}$-convergence is determined on bounded sets, it suffices to establish this for $x \in B_X$.

    Fix $x \in B_X$ and $\varepsilon > 0$. Let $A \in \mathcal{F}$ be given by~\eqref{eq:approx-dense}. As $\mathcal{F}$ contains the Fréchet filter, we may assume that $\frac{1}{n} < \frac{\varepsilon}{2}$ for all $n \in A$. For each $n \in A$, there exists $i \in \mathbb{N}$ such that
    \[
        \|x - x_i\| \leq \frac{\varepsilon}{12 \lambda_n}
        \quad \text{and} \quad
        \|x_i - P_n x_i\| \leq \frac{\varepsilon}{2}.
    \]
    Consequently, for every $n \in A$, we estimate
    \[
        \|x - P_n x\| 
        \leq \|x - x_i\| + \|x_i - P_n x_i\| + \|P_n x_i - P_n x\|
        \leq (1 + \|P_n\|) \|x - x_i\| + \frac{\varepsilon}{2}.
    \]
    By the properties of $P_n$, we further bound
    \[
        \|x - P_n x\| 
        \leq \left(1 + \frac{1}{2} \cdot \frac{1 + 2\|T_n\|}{\sqrt{1 - 4\theta_n}} \right) \frac{\varepsilon}{12 \lambda_n} + \frac{\varepsilon}{2}.
    \]
    Using the estimate $\|T_n\| \leq \lambda_n$ and $\theta_n \leq \frac{4}{5}$, we obtain
    \[
        \|x - P_n x\| 
        \leq \left( \lambda_n + \frac{1}{2} \cdot \frac{\lambda_n + 2\lambda_n}{\sqrt{1 - \frac{4}{5}}} \right) \frac{\varepsilon}{12 \lambda_n} + \frac{\varepsilon}{2}
        \leq \frac{6\varepsilon}{12} + \frac{\varepsilon}{2} 
        = \varepsilon.
    \]
    This completes the proof.
\end{proof}

\section{Proof of Theorem~\ref{Th:A}}

In \cite{GhaFund}, Ghawadrah dealt with the usual convergence, which allowed her to provide several properties under the same conditions. However, as we are investigating just $\FF$ convergence rather than the usual one, we need to separate providing the existence of projections from providing their approximation properties. To be more precise, let us fix the linearly independent countable dense sequence $(x_n)$ in the separable Banach space $X$ and denote $X_k=\operatorname{span}\{x_1, \ldots, x_k \}$. 
Before formulating the main Lemma, we define 
\[
    S:= \left\{(\sigma_{n}) \in (\IR^{\N^2})^{\N}\colon \forall_{n}  \exists_{m} \forall_{j \geq m} \forall_{i} \sigma_{n}(i,j) = 0   \right\},
\]
which is a $\Pi^0_3$-subset (hence Borel) of the product space $(\IR^{\N^2})^{\N^2}$.

    Let us explain that the proper way of thinking of $S$ is that $n$ plays the role of picking $P_{n}$ and $i,j$ says we are considering the coefficient of the value of the constructed function in $x_i$ in the $x_j$ axis. Note that by \cite[Theorem 3.1]{Kechris}, $S$ may be viewed as a Polish space, so even though it appears rather complicated, quantifying over it is not a problem. 

Recall also that $\FF$-projections, even if continuous, need not be uniformly bounded; therefore, we require the existence of a sequence $(\lambda_n)_{n=1}^\infty$ of constants responsible for the continuity of respective projections.

By $c_{00}^\IQ$ we denote $\{(\alpha_n)\in c_{00}: \forall_n \alpha_n \in \IQ\}$. Let $\Theta_n\colon \lambda \mapsto \theta $ be as in Corollary \ref{cor:Fpi-from-Tn}.
\begin{lemma}\label{lem:main}
Let $X$ be a Banach space. Then $X$ has the $\FF$-$\pi$-property if and only if the following condition holds:
\begin{equation*}
\exists_{(\lambda_n)_{n=1}^\infty} \ \exists{(\sigma_{n})\in S} 
\end{equation*} 
\begin{equation}\label{L: ciaglosc}
\forall_{(\alpha_i)\in c_{00}^\IQ} \forall_{n\in \N} \left\| \sum_{i=1}^\infty \alpha_i \sum_{j=1}^\infty \sigma_{n}(i,j) x_j \right\| \leq \lambda_n \left\| \sum_{i=1}^\infty \alpha_i x_i \right\| 
\end{equation}

\begin{equation}\label{L: norm T-T^2}
\forall_{(\alpha_i)\in c_{00}^\IQ}\forall_{n\in\IN}\left\| \sum_{i=1}^\infty \alpha_i \sum_{j=1}^\infty \sigma_{n}(i,j) x_j - \sum_{i=1}^\infty \alpha_i\sum_{j=1}^\infty\sum_{t=1}^\infty  \sigma_{n}(i,j)\sigma_{n}(j,t) x_t \right\| \leq \Theta_n(\lambda_n) \left\|\sum_{i=1}^\infty \alpha_i x_i \right\| 
\end{equation}
\begin{equation}\label{L: baza}
    \forall_{x\in B_X} \forall_{r\in \IN} \exists_{A\in\FF} \forall_{n\in A}\exists_{i\in \IN} \| x-x_i\| \leq \frac1r\cdot\frac{1}{12\lambda_n} \wedge \left\|x_i - \sum_{j=1}^\infty \sigma_{n}(i,j) x_j\right\| < \frac{1}{4r} 
\end{equation}

\begin{proof}
"\(\Rightarrow\)" Assume that \((P_n)_{n=1}^\infty\) is an \(\mathcal{F}\)-\(\pi\)-base for \(X\). By Corollary \ref{C:corrbase}, we may assume that \(\operatorname{im}(P_n) \subset X_m\) for some \(m \in \mathbb{N}\). Set \(\lambda_n :=\big\lceil \|P_n\| \big\rceil + 1\) and define \(\ve{\sigma} \in S\) by the condition
\[
    P_nx_i = \sum_{j=1}^\infty \sigma_{n}(i,j) x_j.
\]
The inclusion \(\operatorname{im}(P_n) \subset X_m\) ensures that above sum is in fact finite. 

Verification of conditions \eqref{L: ciaglosc} and \ref{L: norm T-T^2} is straightforward (note that since $P_n$'s are projections, condition \ref{L: norm T-T^2} is even simpler). To verify condition \eqref{L: baza} let us fix $x\in B_X$ and $r\in \IN$. Since $(P_n)$ witnesses the $\FF$-$\pi$-property of $X$, there is $A\in \FF$ such that for every $n\in A$ $\|P_nx-x\|\leq \frac{1}{8r}$. Hence for $n \in A$ we may find $i\in \IN$ such that 
\[
    \|x-x_i\|\leq \min\left\{\frac{1}{8r(1+\lambda_n)},\frac{1}{12r\lambda_n}\right\},
\] and thus
\[
\|x_i-P_nx_i\|\leq \|x-x_i\|+\|x-P_nx\|+ \|P_nx-P_n x_i\| \leq (1+\lambda_n)\|x-x_i\|+ \frac{1}{8r} \leq \frac{1}{4r}.
\]

"\(\Leftarrow\)" Suppose the proper \((\lambda_n)\) and \(\sigma\)'s are provided by our formula. The first step in the construction is to define  
\begin{equation}
    y_{n}^i = \sum_{j=1}^\infty \sigma_{n}(i, j) x_j,
\end{equation}
and  
\begin{equation}
    z_{n}^i = \sum_{j=1}^\infty \sigma_{n}(i, j) y^i_{n} = \sum_{j=1}^\infty \sum_{t=1}^\infty \sigma_{n}(i, j) \sigma_{n}(j, t) x_t.
\end{equation}

By the definition of the set \(S\), the above sums are finite. Let \(m\) denote their length. Since the sequence \((x_N)\) is linearly independent and dense in \(X\), we can define operators \(T_n \colon X \to X_m\) by the rule \(T_n(x_i) = y_{n}^i\). By condition \eqref{L: ciaglosc}, it follows that \(\|T_n\| \leq \lambda_n\), and by condition \eqref{L: norm T-T^2}, the estimate \(\|T_n - T_n^2\| \leq \Theta_n(\lambda_n)\) holds, since \(T_n^2x_i=z^i_n\). 

Moreover, by \eqref{L: baza} for every $x \in B_X$ and every $\varepsilon>0$, there is a set $A \in \FF$ such that for each $n\in A$ one can find $i\in \mathbb{N}$ with
    \begin{equation}\label{eq:approx-dense-2}
        \|x - x_i\|\;\leqslant\; \frac{\varepsilon}{12\,\lambda_n}
        \quad\text{and}\quad
        \|x_i - T_n x_i\|\;\leqslant\;\frac{\varepsilon}{4},
    \end{equation}    

Hence we may apply Corollary \ref{cor:Fpi-from-Tn} to conclude that $X$ satisfies the $\FF$-$\pi$-property. 
\end{proof}
\end{lemma}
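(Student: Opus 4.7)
The approach is to prove the two implications separately by explicit construction. For the forward direction, given projections $(P_n)_{n=1}^\infty$ witnessing the $\FF$-$\pi$-property, I would first invoke Corollary~\ref{C:corrbase} to replace them, without loss of generality, by projections whose image lies in some $X_{m_n}$. This is crucial because it guarantees that each vector $P_n x_i$ has a \emph{finite} expansion in $(x_j)$, yielding a legitimate element $(\sigma_n) \in S$ via $P_n x_i = \sum_j \sigma_n(i,j) x_j$. Setting $\lambda_n := \lceil \|P_n\|\rceil + 1$, condition \eqref{L: ciaglosc} is just the bound $\|P_n\| \leq \lambda_n$, while \eqref{L: norm T-T^2} holds trivially because $P_n^2 = P_n$. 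For \eqref{L: baza}, I would fix $x \in B_X$ and $r \in \N$, use $\FF$-convergence to obtain $A \in \FF$ with $\|P_n x - x\| < 1/(8r)$ for $n \in A$, and then appeal to the density of $(x_i)$ to pick some $x_i$ close enough to $x$ so that both required inequalities fall out of the triangle inequality.

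For the backward direction, the plan is to reverse-engineer the operators. Define $T_n$ on the dense span of rational linear combinations of the $(x_i)$ by $T_n x_i = \sum_j \sigma_n(i,j) x_j$. Since $(\sigma_n) \in S$, each $T_n$ has finite-dimensional range contained in some $X_m$, and condition \eqref{L: ciaglosc} extends it to a bounded operator with $\|T_n\| \leq \lambda_n$. A direct computation gives $T_n^2 x_i = \sum_{j,t} \sigma_n(i,j)\sigma_n(j,t) x_t$, so \eqref{L: norm T-T^2} yields $\|T_n - T_n^2\| \leq \Theta_n(\lambda_n)$. Finally, \eqref{L: baza} is precisely hypothesis (3) of Corollary~\ref{cor:Fpi-from-Tn} (with $\varepsilon = 1/r$), and that corollary concludes the proof.

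The main obstacle, I expect, is the forward direction: without the reduction to $\on{im} P_n \subseteq X_{m_n}$ furnished by Corollary~\ref{C:corrbase}, the coefficients $\sigma_n(i,j)$ would genuinely depend on infinitely many $j$'s and the candidate witness would fail to lie in $S$, which is what makes the whole set arithmetic in the projective hierarchy. A secondary concern is keeping the constants in \eqref{L: baza} consistent: the factor $1/(12 r \lambda_n)$ must be small enough to absorb the term $\|P_n\|\cdot\|x - x_i\|$ arising in the estimate $\|x_i - P_n x_i\| \leq \|x_i - x\| + \|x - P_n x\| + \|P_n\|\cdot\|x - x_i\|$, which is precisely the origin of the $\lambda_n$-dependence in the density condition and which must be matched by the input to Corollary~\ref{cor:Fpi-from-Tn} on the other direction.
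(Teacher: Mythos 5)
Your proposal is correct and follows essentially the same route as the paper's own proof: Corollary~\ref{C:corrbase} to force $\operatorname{im}P_n\subseteq X_{m}$ so that the coefficient array lands in $S$, the choice $\lambda_n=\lceil\|P_n\|\rceil+1$ with the triangle-inequality bookkeeping for \eqref{L: baza}, and Corollary~\ref{cor:Fpi-from-Tn} applied to the operators $T_nx_i=\sum_j\sigma_n(i,j)x_j$ for the converse. Your two flagged concerns (membership in $S$ and the matching of the $1/(12r\lambda_n)$ constant to the hypothesis of Corollary~\ref{cor:Fpi-from-Tn}) are exactly the points the paper's argument handles.
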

We are ready to prove Theorem \ref{Th:A}.
\begin{proof}
    Let $\{f_i:i\in \IN\}$ be provided by Corollary \ref{cor:borel_selections} and set
    \[
    \mathbb{X}= \mathsf{SB} \times \IN^\IN \times S.
    \]
    Moreover, let 
    \begin{align*}
\mathbb{X}_1 
&= \bigcap_{(\alpha_i) \in c_{00}^\mathbb{Q}} 
    \bigcap_{n \in \mathbb{N}} 
\\
&\quad \Bigg\{ (X, (\lambda_n), \sigma)\in\mathbb{X}\colon\ 
    \left\| \sum_{i=1}^\infty \alpha_i 
      \sum_{j=1}^\infty \sigma_{n}(i, j) f_j(X) 
    \right\| 
    \leq \lambda_n 
    \left\| \sum_{i=1}^\infty \alpha_i f_i(X) \right\| 
  \Bigg\}
\end{align*}
        \[
        \mathbb{X}_{2} =  \bigcap_{(\alpha_i) \in c_{00}^\IQ}\bigcap_{n\in\IN}\left\{
        \begin{array}{l}
        \left( X, (\lambda_n), \sigma \right)\in \mathbb{X} \colon \\
        \quad \left\|
        \sum_{i=1}^\infty \alpha_i \sum_{j=1}^\infty \sigma_{n}(i, j) f_j(X)\right.\\ 
        \qquad- 
        \left.\sum_{i=1}^\infty \alpha_i \sum_{j=1}^\infty \sum_{t=1}^\infty 
        \sigma_{n}(i, j) \sigma_{n}(j, t) f_t(X) 
        \right\| \\
        \quad \leq \Theta_n(\lambda_n) \left\| \sum_{i=1}^\infty \alpha_i f_i(X) \right\|
        \end{array}
        \right\}
    \]
    and for $r\in\IN$ set
        \begin{align*}
\mathbb{W}_{r}
&= 
    \bigcap_{n\in\IN}
     \bigcup_{i \in \mathbb{N}} 
    \Bigg\{ (X, (\lambda_n), \sigma, x,  A)\in \mathbb{X}\times B_{C(\Delta)}\times \{0,1\}^\IN \colon
\\
&\quad n \notin A \vee \left(
\| x-f_i(X)\| \leq \frac1r\cdot\frac{1}{12\lambda_n} \wedge \left\|f_i(X) - \sum_{j=1}^\infty \sigma_{n}(i,j) f_j(X)\right\| < \frac{1}{4r}\right)
  \Bigg\}.
\end{align*}
Note that if $(X, (\lambda_n), \sigma, x,  A)\in \mathbb{W}_r$, then $x\in X$.

Given a product space $A\times B$, by $\Pi_A\colon A\times B \to A$ we mean the projection onto the \(A\)-coordinate. Let 
\[
\mathbb{V}_r=\mathbb{W}_r \cap \mathbb{X}\times B_{C(\Delta)}\times \FF, \  \mathbb{V}=\bigcap_{r\in\IN}  \Pi_{\mathbb{X}\times B_{C(\Delta)}} [\mathbb{V}_r].
\]
Finally set 
\[
\mathbb{X}_3 = \mathbb{X} \setminus \Pi_{\mathbb{X}}\left[\left( \mathbb{X} \times B_{C(\Delta)}\right)\setminus \mathbb{V}\right].
\]

Note that by Lemma \ref{lem:main} $\mathsf{SB}_{\FF-\pi}=\Pi_{\mathsf{SB}}\left[\mathbb{X}_{1}\cap \mathbb{X}_{2} \cap \mathbb{X}_{3}\right]$. Moreover, if $\FF$ is $\Sigma_s^1$, then
clearly $\mathbb{X}_{1},\mathbb{X}_{2}, \mathbb{W}_{r}$ are Borel, $\mathbb{V}_{r}, \mathbb{V}$ are $\Sigma_s^1$, thus $\mathbb{X}_3$ is $\Pi^1_{s+1}$ and we conclude that $\mathsf{SB}_{\FF-\pi}$ is $\Sigma_{s+2}^1$.

In order to get the claim concerning countably generated filter \(\FF\) generated by sets \(\{F_l\colon l\in\IN\}\), one needs to provide analogous reasoning, yet in the condition \eqref{L: baza} of Lemma \ref{lem:main} projective quantifier $\exists_{A\in\FF}$ may be replaced by the countable one $\exists_{l\in\IN}$ with further referring to $A_l$ instead of $A$. In such a case we set 
            \begin{align*}
\mathbb{V'}
&= 
    \bigcap_{r\in\IN}
    \bigcup_{l\in\IN}
    \bigcap_{n\in\IN}
     \bigcup_{i \in \mathbb{N}} 
    \Bigg\{ (X, (\lambda_n), \sigma, x)\in \mathbb{X}\times B_{C(\Delta)}\times \{0,1\}^\IN \colon
\\
&\quad n \notin A_l \vee \left(
\| x-f_i(X)\| \leq \frac1r\cdot\frac{1}{12\lambda_n} \wedge \left\|f_i(X) - \sum_{j=1}^\infty \sigma_{n}(i,j) f_j(X)\right\| < \frac{1}{4r}\right)
  \Bigg\}
\end{align*}
along with
\[
\mathbb{X'}_3 = \mathbb{X} \setminus \Pi_{\mathbb{X}}\left[\left( \mathbb{X} \times B_{C(\Delta)}\right)\setminus \mathbb{V'}\right].
\]
In such a case sets  $\mathbb{X}_{1},\mathbb{X}_{2}, \mathbb{V'}$ are Borel, yet $\mathbb{X'}_{3}$ is $\Pi_1^1$, so $\mathsf{SB}_{\FF-\pi}$ is $\Sigma^1_2$.

  \end{proof}
Although we were unable to reduce the problem to a genuine $\Sigma^1_3$ set, we conjecture that the formula above is not substantially simpler. Achieving such a reduction would likely require deriving convergence on the entire space $X$ from convergence on a dense subset, or encoding $\mathcal{F}$-convergence using countable quantifiers. A significant obstacle to this approach is the potential failure of uniform boundedness of the norms of the witnessing projections. This consideration also motivates the formulation of our condition, which begins with the existence of the sequence $(\lambda_n)_{n=1}^\infty$. This existential quantification already places the complexity beyond the Borel hierarchy and obviates the need to enforce rationality of the $\sigma$'s. Indeed, even restricting quantification to a countable set $S$ would not suffice to reduce the resulting complexity.


\begin{remark}
    One might ask whether our result justifies the effort involved, as the problem might initially appear straightforward. A natural first attempt could be to express the property as the formula:
    \[
        \exists_{(P_n)} \forall_{x \in X} \forall_{\varepsilon > 0} \exists_{A \in \mathcal{F}} \forall_{n \in A} \|x - P_n x\| < \varepsilon,
    \]
    which, at first glance, resembles the definition of a \(\Sigma^1_3\) set (possibly modulo some technical details). However, this is not the case. The initial existential quantifier requires the existence of a sequence of projections, but the space over which these projections are considered is non-trivial to define. 

    One might initially consider the space of all bounded linear operators \(B(X, X)\). However, this space is typically not separable in the norm topology, and its unit ball is Polish in the Strong Operator Topology (SOT) only if \(X^*\) is separable. A more refined approach would restrict attention to the space of finite-rank operators, which is separable (in either norm or SOT) only if \(X^*\) is separable. Consequently, directly deducing the final projective complexity class from such a formula appears infeasible without employing the detailed approximation arguments presented earlier in this work.
\end{remark}

\section{Proof of Theorem~\ref{Th:B} and Theorem~\ref{Th:C}}
This section is heavily inspired by the proof of \cite[Theorem B]{RKS}.
\begin{proof}[Proof of Theorem~\ref{Th:B}]
Set 
\[
    \mc{A}:= \Bigg\{A \subset \IN\colon \exists_{x \in X} \exists_{\varepsilon>0}\; A \supset \Big\{ n \in \IN\colon \big\|P_nx - x \big\| \leq \varepsilon \Big\}\Bigg\}.
\]
Clearly $\mc{A} \subset \FF$. Note that $(P_n)$ witnesses that $X$ has $\FF^\prime$-$\pi$-property for any filter $\FF'$ on $\IN$ such that $\mc{A} \subset \FF' $. Now, for every $n \in \N$, consider the set
\[
\mc{B}_n= \Big\{ (x,\varepsilon,A)\in X \times (0,\infty)\times \{0, 1\}^\IN\colon n \in A \vee \big\|P_nx - x \big\| > \varepsilon \Big\}.
\]
By the continuity of $P_n$ we obtain that $\mc{B}_n$ is open. Observing that
\[
\mc{A} = \on{proj}_{\{0, 1\}^\IN}\Big[\bigcap_{n \in \IN} \mc{B}_n\Big],
\]
we deduce that $\mc{A}$ is analytic.\smallskip

As $\mc{A} \subset \FF$, finite intersections on elements of $\mc{A}$ are readily non-empty. Consequently, $\mc{A}$ generates a filter $\FF' \subset \FF$. We have:
\[
    \begin{aligned}
    \FF^\prime= &\{A \supset \IN\colon \exists_{n \in \IN} \exists_{A_1, A_2, \ldots, A_n \in \mc{A}} A \supset A_1 \cap A_2 \cap \ldots \cap A_n \} \\
    = & \bigcup_{n \in \IN} \on{proj}_{(\{0, 1\}^\N)_1} [\{ (A, A_1, \ldots, A_n)\in (\{0, 1\}^\N)^{n+1}\colon \\
    & A \supset A_1 \cap \ldots \cap A_n \wedge A_1, \ldots, A_n \in \mc{A}\}],
\end{aligned}
\]
where $\on{proj}_{(\{0, 1\}^\N)_1}$ is the projection onto the first coordinate. Consequently, $\FF'$ is an analytic filter and $(P_n)_{n=1}^\infty$ witnesses that $X$ has $\FF'$-$\pi$-property since $\mc{A}\subset \FF' \subset \FF$.
\end{proof}
\begin{proof}[Proof of Theorem~\ref{Th:C}]
  We start with describing the family of all analytic filters on $\IN$. Recall that set $B$ is analytic within Polish space $Y$ if and only if there exists a closed subset $D\subset Y\times \IN^\IN$ such that $B=\Pi_Y[D]$, while the space $F(Y\times \IN^\IN)$ is a standard Borel space while equipped with the Effros-Borel structure. We may consider the following sets
    \begin{align*}
    \mathbb{INT}
    &=
    \bigg\{D\in F\left(\{0,1\}^\IN\times\IN^\IN\right)\colon    \forall_{(a_1,b_1),(a_2,b_2)\in \{0,1\}^\IN\times\IN^\IN} \exists_{(a_3,b_3)\in \{0,1\}^\IN\times\IN^\IN}
    \\
    &
    \qquad (a_1,b_1)\notin D \vee (a_2,b_2)\notin D \vee \left[ a_3=a_1\cap a_2 \wedge (a_3,b_3)\in D\right] \bigg\},    
    \end{align*}
     \begin{align*}
    \mathbb{SUP}
    &=
    \bigg\{D\in F\left(\{0,1\}^\IN\times\IN^\IN\right)\colon \forall_{(a_1,b_1)\in \{0,1\}^\IN\times\IN^\IN}\forall_{a\in\{0,1\}^\IN} \exists_{(a_2,b_2)\in \{0,1\}^\IN\times\IN^\IN}
   \\
    &\qquad (a_1,b_1)\notin D \vee a\not\supset a_1 \vee \left[a=a_2 \wedge (a_2,b_2) \in D\right]    \bigg\}    
    \end{align*}

Note that $\mathbb{INT}$ stands for family of those $D$ whose projections are closed with respect to intersections of its elements, while $\mathbb{SUP}$ stands for family of those $D$ whose projections are closed with respect to supersets of its elements. We may hence define 
\[\mathbb{AF}=\mathbb{SUP}\cap\mathbb{INT}\]
and observe that $\FF\subset\{0,1\}^\IN$ is an analytic filter if and only if there is a $D\in \mathbb{AF}$ such that $\FF=\Pi_{\{0,1\}^\IN}[D]$.

Moreover, it is straightforward to verify that both $\mathbb{SUP},\mathbb{INT}$ are $\Pi^1_2$, since for any Polish $Y$ and $y\in Y$ set $\{D\in F(Y)\colon y\in D\}=\bigcap_{n\in\IN} \{D\in F(Y)\colon D\cap B(y,\frac1n)\}$ is Borel within $F(Y)$. 

Let $\mathbb{X}, \mathbb{X}_1, \mathbb{X}_2$ be as in the proof of Theorem \ref{Th:A}, and set 
$    \mathbb{Y}=\mathbb{X}\times F\left(\{0,1\}^\IN\times\IN^\IN\right), \mathbb{Y}_1=\mathbb{X}_1\times F\left(\{0,1\}^\IN\times\IN^\IN\right), \mathbb{Y}_2=\mathbb{X}_2\times F\left(\{0,1\}^\IN\times\IN^\IN\right)$.
Moreover, for $r\in\IN$ set
        \begin{align*}
\mathbb{U}_{r}
&= 
    \bigcap_{n\in\IN}
     \bigcup_{i \in \mathbb{N}} 
    \Bigg\{ (X, (\lambda_n), \sigma, D, x,  A)\in \mathbb{Y}\times B_{C(\Delta)}\times \{0,1\}^\IN  \colon A \in \Pi_{\{0,1\}^\IN}[D] \wedge 
\\
&\quad \left[ n\notin A \vee \left(
\| x-f_i(X)\| \leq \frac1r\cdot\frac{1}{12\lambda_n} \wedge \left\|f_i(X) - \sum_{j=1}^\infty \sigma_{n}(i,j) f_j(X)\right\| < \frac{1}{4r} \right)\right]
  \Bigg\}.
\end{align*}
Note that 
\[
A \in \Pi_{\{0,1\}^\IN}[D] \Leftrightarrow \exists_{b\in\IN^\IN} (A,b)\in D,
\]
so $\mathbb{U}_{r}$ is $\Sigma_1^1$.
Now let
\[
\mathbb{U}=\bigcap_{r\in\IN}  \Pi_{\mathbb{Y}\times B_{C(\Delta)}} [\mathbb{U}_r] \mbox{ and } \mathbb{Y}_3 = \mathbb{Y} \setminus \Pi_{\mathbb{Y}}\left[\left( \mathbb{Y} \times B_{C(\Delta)}\right)\setminus \mathbb{U}\right].
\]
Note that by Lemma \ref{lem:main} and Theorem \ref{Th:B} 
\[
\mathsf{SB}_{\text{Filter-}\pi}=\Pi_{\mathsf{SB}}\left[\mathbb{Y}_{1}\cap \mathbb{Y}_{2} \cap \mathbb{Y}_{3}\cap\left(\mathbb{X}\times\mathbb{AF}\right)\right].
\]
Observe that $\mathbb{Y}_{1},\mathbb{Y}_{2}$ are Borel, $\mathbb{U}$ is $\Sigma_1^1$, thus $\mathbb{Y}_3$ is $\Pi^1_{2}$. Set $\mathbb{X}\times\mathbb{AF}$ is also $\Pi^1_2$, so we conclude that $\mathsf{SB}_{\text{Filter-}\pi}$ is $\Sigma_{3}^1$.
\end{proof}

\section{Problems}
\begin{question}
In the analysis of the descriptive complexity of the set $\mathsf{SB}_{{\rm Fr}\text{-}\pi}$ by Gha\-wa\-drah \cite{GhaFund}, a choice of dense sequences in a separable Banach space is employed to characterise the $\pi$-property with respect to the Fréchet filter. It remains unclear whether her approach presupposes the linear independence of this sequence, however this seems necessary. Furthermore, the claimed Borel complexity of $\Sigma^0_6$ for $\mathsf{SB}_{{\rm Fr}\text{-}\pi}$ appears to hinge on the definition of certain operators $T_{K,\varepsilon}$, whose well-posedness may be inadequately justified. Let us then state the following problem formally: 
\begin{quote}\emph{is there a continuous (with respect to an admissible topology) way of picking dense and linearly independent subsets of separable Banach spaces?}
\end{quote}
\end{question}
\bibliographystyle{plain}

\end{document}